\def\NZQ{\mathbb}               
\def\ZZ{{\NZQ Z}}
\def\RR{{\NZQ R}}
\def\frk{\mathfrak}               
\def\Phi{{\frk N}}
\def\ab{{\bold a}}
\def\eb{{\bold e}}
\def\xb{{\bold x}}
\def\yb{{\bold y}}
\def\vb{{\bold v}}
\def\opn#1#2{\def#1{\operatorname{#2}}} 
\opn\chara{char} 
\opn\length{\ell} 
\opn\pd{pd} 
\opn\rk{rk}
\opn\projdim{proj\,dim} 
\opn\injdim{inj\,dim} 
\opn\rank{rank}
\opn\depth{depth} 
\opn\grade{grade} 
\opn\height{height}
\opn\embdim{emb\,dim} 
\opn\codim{codim}
\opn\Tr{Tr} 
\opn\bigrank{big\,rank}
\opn\superheight{superheight}
\opn\lcm{lcm}
\opn\trdeg{tr\,deg}
\opn\reg{reg} 
\opn\lreg{lreg} 
\opn\ini{in} 
\opn\lpd{lpd}
\opn\size{size}
\opn\mult{mult}
\opn\dist{dist}
\opn\cone{cone}
\opn\lex{lex}
\opn\rev{rev}
\opn\div{div} \opn\Div{Div} \opn\cl{cl} \opn\Cl{Cl}
\opn\Spec{Spec} \opn\Supp{Supp} \opn\supp{supp} \opn\Sing{Sing}
\opn\Ass{Ass} \opn\Min{Min}
\opn\Ann{Ann} \opn\Rad{Rad} \opn\Soc{Soc}
\opn\Syz{Syz} \opn\Im{Im} \opn\Ker{Ker} \opn\Coker{Coker}
\opn\Am{Am} \opn\Hom{Hom} \opn\Tor{Tor} \opn\Ext{Ext}
\opn\End{End} \opn\Aut{Aut} \opn\id{id} \opn\ini{in}
\opn\nat{nat}
\opn\pff{pf}
\opn\Pf{Pf} \opn\GL{GL} \opn\SL{SL} \opn\mod{mod} \opn\ord{ord}
\opn\Gin{Gin}
\opn\Hilb{Hilb}\opn\adeg{adeg}\opn\std{std}\opn\ip{infpt}
\opn\Pol{Pol}
\opn\sat{sat}
\opn\Var{Var}
\opn\Gen{Gen}
\opn\aff{aff} \opn\con{conv} \opn\relint{relint} \opn\st{st}
\opn\lk{lk} \opn\cn{cn} \opn\core{core} \opn\vol{vol}
\opn\link{link} \opn\star{star}
\opn\gr{gr}
\def\Hc{{\mathcal H}}
\def\Fc{{\mathcal F}}
\def\Pc{{\mathcal P}}
\def\Qc{{\mathcal Q}}
\def\pot#1#2{#1[\kern-0.28ex[#2]\kern-0.28ex]}
\opn\dirlim{\underrightarrow{\lim}}
\opn\inivlim{\underleftarrow{\lim}}
\def\Implies{\ifmmode\Longrightarrow \else
        \unskip${}\Longrightarrow{}$\ignorespaces\fi}
\def\implies{\ifmmode\Rightarrow \else
        \unskip${}\Rightarrow{}$\ignorespaces\fi}
\def\iff{\ifmmode\Longleftrightarrow \else
        \unskip${}\Longleftrightarrow{}$\ignorespaces\fi}
\newtheorem{Theorem}{Theorem}[section]
\newtheorem{Lemma}[Theorem]{Lemma}
\newtheorem{Corollary}[Theorem]{Corollary}
\newtheorem{Proposition}[Theorem]{Proposition}
\newtheorem{Example}[Theorem]{Example}
\newtheorem{Question}[Theorem]{Question}
\newtheorem*{acknowledgement}{Acknowledgment}
\let\epsilon\varepsilon
\let\phi=\varphi
\let\kappa=\varkappa
\def\qed{\ifhmode\textqed\fi
	\ifmmode\ifinner\quad\qedsymbol\else\dispqed\fi\fi}
\def\textqed{\unskip\nobreak\penalty50
	\hskip2em\hbox{}\nobreak\hfil\qedsymbol
	\parfillskip=0pt \finalhyphendemerits=0}
\def\dispqed{\rlap{\qquad\qedsymbol}}
\opn\dis{dis}
\opn\height{height}
\opn\dist{dist}
\def\pnt{{\raise0.5mm\hbox{\large\bf.}}}
\opn\Lex{Lex}
\newcommand{\vbig}[1]{\multicolumn{2}{c}{$\mbox{\smash{\Huge $#1$}}$}}
\begin{document}
\title{Reflexive polytopes arising from edge polytopes}
\author[T. Nagaoka]{Takahiro Nagaoka}
\address[Takahiro Nagaoka]{Department of Mathematics,
	Graduate School of Science,
	Kyoto University,
	Kyoto, 606-8522, Japan}
\email{tnagaoka@math.kyoto-u.ac.jp}
\author[A. Tsuchiya]{Akiyoshi Tsuchiya}
\address[Akiyoshi Tsuchiya]{Department of Pure and Applied Mathematics,
	Graduate School of Information Science and Technology,
	Osaka University,
	Suita, Osaka 565-0871, Japan}
\email{a-tsuchiya@ist.osaka-u.ac.jp}

\subjclass[2010]{52B12, 52B20}
\date{}
\keywords{reflexive polytope, reflexive dimension,
	Gorenstein Fano polytope, edge polytope, normal polytope, $(0,1)$-polytope, integer decomposition property, finite simple graph}
\begin{abstract}
	It is known that every lattice polytope is unimodularly equivalent to a face of some reflexive polytope.
	A stronger question is to ask whether every $(0,1)$-polytope is unimodularly equivalent to a facet of some
	reflexive polytope.
	A large family of $(0,1)$-polytopes are the edge polytopes of finite simple graphs.
	In the present paper, 
	it is shown that, by giving
	a new class of reflexive polytopes,
	each edge polytope is unimodularly equivalent to a facet of some reflexive polytope.
	Furthermore, we extend the characterization of normal edge polytopes to a characterization of normality for these new reflexive polytopes.
\end{abstract}
\maketitle

\section*{Introduction}
The reflexive polytope is one of the keywords belonging to the current trends 
in the research of convex polytopes.  In fact, many authors have studied 
reflexive polytopes from the viewpoints of combinatorics, commutative algebra
and algebraic geometry.
Hence to find new classes of reflexive polytopes is an important problem.

A \textit{lattice polytope} is a convex polytope all of whose vertices have integer coordinates.
A lattice polytope $\Pc \subset \RR^d$ of dimension $d$ is called \textit{reflexive} if the origin of $\RR^d$ is a unique lattice point belonging to the interior of $\Pc$ and its dual polytope
$$\Pc^\vee:=\{\yb \in \RR^d \ | \  \langle \xb,\yb \rangle \leq 1 \ \text{for all}\  \xb \in \Pc \}$$
is also a lattice polytope, where $\langle \xb,\yb \rangle$ is the usual inner product of $\RR^d$.
A reflexive polytope is often called a \textit{Gorenstein Fano} polytope.
It is known that reflexive polytopes correspond to Gorenstein toric Fano varieties, and they are related to
mirror symmetry (see, e.g., \cite{mirror,Cox}).
In each dimension, there exist only finitely many reflexive polytopes 
up to unimodular equivalence (\cite{Lag}) 
and all of them are known up to dimension $4$ (\cite{Kre}).
Moreover, 
it is known that every lattice polytope is unimodularly equivalent to a face of some reflexive polytope (\cite{refdim}).
Especially, we are interested in which lattice polytopes are unimodularly equivalent to a facet of some reflexive polytope. 

Now, we ask the following question:
\begin{Question}
	\label{q1}
Is every $(0,1)$-polytope unimodularly equivalent to a facet of some reflexive polytope?
\end{Question}

For a lattice polytope $\Pc$, its \textit{reflexive dimension} 
is the smallest integer $r$ such that $\Pc$ is unimodularly equivalent to a face of a reflexive polytope of dimension $r$.
Computing the reflexive dimension of a lattice polytope is hard problem in general.
However, if Question $\ref{q1}$ is true, it is reasonable to determine the reflexive dimension of a $(0,1)$-polytope.
In fact, Question $\ref{q1}$ is equivalent to the following:
\begin{Question}
	\label{q2}
	For any $(0,1)$-polytope of dimension $d$, 
	is its reflexive dimension equal to $d+1$? 
\end{Question}

Recently, this question has been shown for the following classes of $(0,1)$-polytopes (\cite{HT1,HT2,HT3}): 
\begin{itemize}
	\item  The order polytopes of partially ordered sets.
	\item  The chain polytopes of partially ordered sets. 
	\item  The stable set polytopes of perfect graphs.
\end{itemize}
To solve Question \ref{q2} for the above polytopes, the algebraic technique on Gr\"onbner bases is used.
In fact, these polytopes are normal and, in particular, compressed (see Section 3 for definitions).
However, a $(0,1)$-polytope may not be normal.
Hence we cannot solve this question for any $(0,1)$-polytope by using the same methods.
In the present paper, by using matrix theory, we show this question is true for the edge polytopes of finite simple graphs, which are not necessarily normal polytopes.

The paper is organized as follows.
In Section 1, we recall the definition of the edge polytopes of finite simple graphs.
In Section 2, we will give a new class of reflexive polytopes arising from some class of lattice polytopes (Theorem \ref{main}).
From this result, we can show that every edge polytope is unimodularly equivalent to a facet of some reflexive polytope (Corollary \ref{Cor:ref}).
Finally, in Section 3, we will give a criterion to determine the normality of the reflexive polytopes arising from the edge polytope of connected finite simple graph described in Corollary \ref{Cor:ref} (Theorem \ref{thm:normal}).

\begin{acknowledgement}{\rm
		The authors would like to thank anonymous referees for reading the manuscript carefully.
		The second author is partially supported by Grant-in-Aid for JSPS Fellows 16J01549.
	}
\end{acknowledgement}

\section{edge polytopes}
In this section, we introduce the edge polytopes of finite simple graphs.
First, we recall that two lattice polytopes $\Pc \subseteq \RR^d$ and $\Pc' \subseteq \RR^{d'}$ are said to be \emph{unimodularly equivalent} if there exists an affine map from the affine span $\aff(\Pc)$ of $\Pc$ to the affine span $\aff(\Pc')$ of $\Pc'$ that maps $\ZZ^d \cap \aff(\Pc)$ bijectively onto $\ZZ^{d'} \cap \aff(\Pc')$ and maps $\Pc$ to $\Pc'$. 

Now, we define the edge polytopes of finite simple graphs.
Recall that a finite \textit{simple} graph is a finite graph with no loops, no multiple edges and no isolated vertices.
Let $G$ be a simple graph on the vertex set $V(G)=[d]=\{1,\ldots,d\}$ and denote $E(G)$ the edge set of $G$.
The \textit{edge polytope} $\Pc_G \subset \RR^d$ of $G$ is the convex hull of all vectors $\eb_i+\eb_j$ such that
$\{i,j\} \in E(G)$, where $\eb_1,\ldots,\eb_d$ are the canonical unit vectors of $\RR^d$.
This means that the edge polytope of $\Pc_G$ of $G$ is the convex hull of all row vectors of the \textit{incidence matrix} $A_G$ of $G$, where $A_G$ is the matrix in $\{ 0,1 \}^{E(G) \times [d]}$ with 
\begin{align*}
a_{e,v}=
\begin{cases}
1&\text{if\ } v \in e,\\
0&\text{otherwise}.
\end{cases}
\end{align*}
Moreover, the dimension of $P_G$ equals $\text{rank}(A_G)-1$.
In fact, 
\begin{Lemma}[{\cite[p. 57]{VV}}]
	\label{dim}
	Let $G$ be a finite simple graph on $[d]$
	and $c_0(G)$ the number of connected bipartite components of $G$.
	Then the dimension of the edge polytope $\Pc_G$ of $G$ equals $d-c_0(G)-1$.
\end{Lemma}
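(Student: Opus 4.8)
The statement to prove is Lemma~\ref{dim}: that $\dim \Pc_G = d - c_0(G) - 1$, where $c_0(G)$ counts the connected bipartite components of $G$. Since $\dim \Pc_G = \rank(A_G) - 1$ by the remark preceding the lemma, it suffices to show $\rank(A_G) = d - c_0(G)$. I would reduce immediately to the case where $G$ is connected: the incidence matrix of a disjoint union of graphs is block-diagonal (after permuting columns to group the vertices of each component), so $\rank(A_G) = \sum_k \rank(A_{G_k})$ and the vertex count $d$ splits additively as well; hence it is enough to prove that a connected graph $H$ on $n$ vertices has $\rank(A_H) = n - 1$ if $H$ is bipartite and $\rank(A_H) = n$ if $H$ is nonbipartite.

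\medskip
\textbf{Key steps.} First I would handle the bipartite connected case. If $H$ is bipartite with vertex bipartition $V = V_1 \sqcup V_2$, then every row $\eb_i + \eb_j$ of $A_H$ satisfies the linear relation $\sum_{v \in V_1} x_v - \sum_{v \in V_2} x_v = 0$ evaluated at the row vector (each edge has exactly one endpoint in each part), so $\rank(A_H) \le n - 1$. For the reverse inequality I would exhibit $n-1$ independent rows: pick a spanning tree of $H$; the $n-1$ rows of $A_H$ indexed by the tree edges are linearly independent, which one checks by a leaf-stripping induction (a pendant vertex appears in exactly one tree-edge row, so peel it off). Second, the nonbipartite connected case. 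Such an $H$ contains an odd cycle; after taking a spanning tree $T$ together with one extra edge $e$ closing an odd cycle, I would show the $n$ rows indexed by $E(T) \cup \{e\}$ are independent. The tree rows are independent as before and span the hyperplane $\{\,\sum_{V_1} x_v = \sum_{V_2} x_v\,\}$ relative to any $2$-coloring of $T$; the extra row $\eb_p + \eb_q$ with $e = \{p,q\}$ lies off that hyperplane precisely because $p,q$ receive the same color in the $2$-coloring of $T$ (that is what makes the cycle odd), so adjoining it raises the rank to $n$, and trivially $\rank(A_H)\le n$ since $A_H$ has $n$ columns.

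\medskip
\textbf{Assembling.} Combining the two cases with the block-diagonal reduction gives $\rank(A_G) = \sum_{k}(n_k - \varepsilon_k)$ where $n_k$ is the size of the $k$-th component and $\varepsilon_k = 1$ if that component is bipartite, $0$ otherwise; summing, $\rank(A_G) = d - c_0(G)$, and subtracting $1$ yields the dimension formula. (One should note the hypothesis of no isolated vertices guarantees every component has at least one edge, so the parity argument is not vacuous and each component genuinely contributes; an isolated vertex would be a bipartite component contributing a zero column, which is consistent with the formula anyway, but the "simple graph" convention sidesteps this.)

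\medskip
\textbf{Main obstacle.} There is no deep difficulty here; the one place requiring genuine care is the linear independence of the spanning-tree rows and the verification that the odd-cycle edge escapes the bipartite hyperplane. The cleanest route is the leaf-induction for the tree rows combined with the $2$-coloring description of the span, rather than a direct determinant computation. Since the cited source \cite{VV} treats this as standard, I would keep the argument brief and lean on the spanning-tree/odd-cycle dichotomy as the structural heart of the proof.
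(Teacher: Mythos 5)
Your argument is correct. The paper itself gives no proof of Lemma~\ref{dim} --- it simply cites \cite{VV} --- so there is nothing to compare against line by line; what you have written is the standard and essentially canonical derivation: reduce to connected components via the block structure of $A_G$, bound the rank above by the bipartite relation $\sum_{V_1}x_v-\sum_{V_2}x_v=0$, and attain the bound with spanning-tree rows (leaf-stripping) plus, in the nonbipartite case, one chord whose fundamental cycle is odd. The only step you gloss over slightly is the existence of such a chord for an arbitrary spanning tree of a nonbipartite connected graph (if every chord closed an even cycle, the $2$-coloring of the tree would properly $2$-color all of $H$), but this is immediate and your $2$-coloring framing already contains it. Your use of the identity $\dim\Pc_G=\rank(A_G)-1$ is also justified, since all rows lie on the hyperplane $\sum_v x_v=2$, which misses the origin; the paper states this identity just before the lemma.
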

Sometimes it is convenient to work with full-dimensional lattice polytopes, i.e., lattice polytopes embedded in a space of their same dimension.
However, the edge polytopes of finite simple graphs are not full-dimensional from Lemma \ref{dim}.
Given an edge polytope $\Pc_G$, one can easily get a full-dimensional unimodularly equivalent copy of $\Pc_G$ by considering the lattice polytope defined as the convex hull of the row vectors of $A_G$ with some columns deleted.
Indeed,
let $G_1,\ldots,G_k$ be the connected bipartite components of $G$.
If $k=0$, namely, $G$ is not bipartite, we consider the unimodular matrix
\[U=
\begin{pmatrix}
1& 0 & \cdots & 0 & 0  \\
1 & 1 & \ddots & \ddots &\vdots  \\
\vdots & 0 & \ddots & \ddots & \vdots  \\
\vdots & \vdots & \ddots & \ddots & 0 \\
1 & 0 & \cdots &0 & 1 \\
\end{pmatrix} \in \ZZ^{d \times d},
\]
where a matrix $A \in \ZZ^{d\times d}$ is \empty{unimodular} if $\det(A)=\pm 1$.
Then one has 
 $f_U({\mathcal{P}_G})=\{2\} \times \Qc_G$, where $f_U$ is the linear transformation in $\RR^d$ defined by $U$, i.e., $f_U(\bold{v})=\bold{v}U$ for all $\bold{v} \in \RR^d$, and $\Qc_G$ is the full-dimensional lattice polytope defined as the convex hull of the row vectors of $A_G$ with the $1$st column deleted. Hence $\Qc_G$ is
 a full-dimensional unimodularly equivalent copy of $\Pc_G$.
Next, assume that $k \geq 1$ and $V_{i1} \sqcup V_{i2}$ the bipartition of each $G_i$.
Then we may suppose that $\{i\} \in V_{i1}$ for each $i$ and $\{k+1\} \in V_{12}$.
Set $V_1=\bigsqcup_{1\leq i \leq k} V_{i1}$ and $V_2=\bigsqcup_{1\leq i \leq k} V_{i2}$.
Let $U=(u_{ij})_{1 \leq i ,j, \leq d} \in \ZZ^{d \times d}$ be the unimodular matrix such that 
\[u_{ij}=
\begin{cases}
1 & \text{if} \  i=j,\\
1 & \text{if}  \ 1 \leq j \leq k \ \text{and} \  i \in V_{j1},   \\
1 & \text{if}  \ 1 \leq j \leq k \ \text{and} \  i \in V_{2} \setminus V_{j2},\\
0 & \text{otherwise},
\end{cases}
\]
and 
$V=(v_{ij})_{1 \leq i ,j, \leq d} \in \ZZ^{d \times d}$ the unimodular matrix such that 
\[v_{ij}=
\begin{cases}
1 & \text{if} \  i=j,\\
1 & \text{if}  \ j=k+1 \ \text{and} \  i \in V_{2},\\
0 & \text{otherwise}.
\end{cases}
\]
Then since $V_{i1}$ and $V_{i2}$ are the bipartition of each $G_i$, one has 
$f_{VU}({\mathcal{P}_G})=\{1\}^{k+1} \times \Qc_G$, where
$\Qc_G$ is the full-dimensional lattice polytope defined as the convex hull of the row vectors of $A_G$ with the $1,\ldots, (k+1)$st columns deleted. Hence $\Qc_G$ is
a full-dimensional unimodularly equivalent copy of $\Pc_G$.
An example of this can be observed in Example \ref{full}.
\begin{Example}
	\label{full}
	{\em
	Let $G$ be the following finite simple graph with the incidence matrix of $A_G$.
	\begin{center}
		\begin{tabular}{c}
	\begin{picture}(140,90)(90,90)
	\put(60,165){$G$:}
	\put(80,160){$1$}
	\put(80,100){$3$}
	\put(155,160){$5$}
	\put(155,100){$4$}
	\put(200,160){$2$}
	\put(200,100){$6$}
	\put(75,130){$e_1$}
	\put(155,130){$e_3$}
	\put(195,130){$e_5$}
	\put(115,90){$e_2$}
	\put(115,165){$e_4$}
	\put(90,160){\circle*{5}}
	\put(90,100){\circle*{5}}
	\put(150,160){\circle*{5}}
	\put(150,100){\circle*{5}}
	\put(210,160){\circle*{5}}
	\put(210,100){\circle*{5}}
	\put(90,160){\line(0,-1){60}}
	\put(150,160){\line(0,-1){60}}
	\put(210,160){\line(0,-1){60}}
	\put(90,160){\line(1,0){60}}
	\put(90,100){\line(1,0){60}}
	\end{picture}
\end{tabular}
 \qquad 
	$A_G=
	\begin{pmatrix}
	1& 0 & 1 & 0 & 0 &0 \\
	0 & 0 & 1 &1 & 0 & 0 \\
	0 & 0 & 0 & 1& 1 & 0 \\
	1 & 0 & 0 & 0& 1 & 0 \\
	0 & 1 & 0 & 0& 0 & 1 \\
	\end{pmatrix}$
\end{center}
	Then by Lemma \ref{dim}, $\Pc_G \subset \RR^6$ is a lattice polytope of dimension $3$.
	Set
	\[
		U=
	\begin{pmatrix}
	1& 0 & 0 & 0 & 0 &0 \\
	0 & 1 & 0 & 0& 0 & 0 \\
	0 & 1 & 1 & 0& 0 & 0 \\
	1 & 0 & 0 & 1& 0 & 0 \\
	0 & 1 & 0 & 0& 1 & 0 \\
	1 & 0 & 0 & 0& 0 & 1 \\
	\end{pmatrix}\ 
	\text{and} \ 	V=
	\begin{pmatrix}
	1& 0 & 0 & 0 & 0 &0 \\
	0 & 1 & 0 & 0& 0 & 0 \\
	0 & 0 & 1 & 0& 0 & 0 \\
	0 & 0 & 0 & 1& 0 & 0 \\
	0 & 0 & 1 & 0& 1 & 0 \\
	0 & 0 & 1 & 0& 0 & 1 \\
	\end{pmatrix}.
	 \]
	Then $U$ and $V$ are unimodular and one has 
	$f_{VU}({\mathcal{P}_G})=\{1\}^{3} \times \Qc_G$, where
	$\Qc_G$ is the full-dimensional lattice polytope defined as the convex hull of the row vectors of $A_G$ with the first, second and third columns deleted. Namely, $\Qc_G$ is the convex hull of the row vectors of 
		$$
	\begin{pmatrix}
	  0 & 0 & 0 \\
	  1 & 0& 0  \\
	  1 & 1& 0  \\
	  0 & 1& 0  \\
	  0 & 0& 1  \\
	\end{pmatrix}.$$
	Hence $\Qc_G$ is a full-dimensional unimodularly equivalent copy of $\Pc_G$.
}
\end{Example}

\section{Reflexive polytopes arising from edge polytopes}
In this section, we construct reflexive polytopes  which arise from the edge polytopes of finite simple graphs.
For two lattice polytopes $\Pc$  and $\Qc$ of dimension $d$ in $ \RR^n$, we set
$$\Omega(\Pc,\Qc)=\textnormal{conv}\{(\Pc\times\{1\}) \cup (-\Qc\times\{-1\})\}\subset \RR^{n+1},$$
where $-\Qc=\{-\ab \ | \ \ab \in \Qc \}$.
If $\Pc = \Qc$, then we will write $\Omega(\Pc)=\Omega(\Pc,\Pc)$.
We remark that the origin of $\RR^{n+1}$ is always a relative interior lattice point of $\Omega(\Pc)$. 
If $\Pc$ and $\Qc$ are full-dimensional, namely, $n=d$, then $\Omega(\Pc,\Qc)$ is a lattice polytope of dimension $d+1$, and each of $\Pc$ and $\Qc$ is a facet of $\Omega(\Pc,\Qc)$.

We show the following theorem:
\begin{Theorem}
	\label{main}
	Let $\Pc, \Qc \subset \RR^d$ be $(0,1,2)$-polytopes of dimension $d$ such that all of their vertices belong to 
	$$\{\mathbf{0}\} \cup \{\eb_i \mid 1 \leq i \leq d \} \cup\{\eb_i+\eb_j \mid 1 \leq i \leq j \leq d \} \subset \RR^d.$$
	If the origin of $\RR^{d+1}$ belongs to the interior of $\Omega(\Pc, \Qc)$, then $\Omega(\Pc, \Qc)$ is reflexive.
	In particular, $\Omega(\Pc)$ and $\Omega(\Qc)$ are reflexive.
\end{Theorem}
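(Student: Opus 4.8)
The plan is to verify reflexivity directly from the definition: assuming $\mathbf{0}$ is interior to $\Omega:=\Omega(\Pc,\Qc)$, I must show $\mathbf{0}$ is the \emph{only} interior lattice point and that the dual $\Omega^\vee$ is a lattice polytope. The second condition is the substantive one, and the key structural fact driving everything is that the facet-defining inequalities of $\Omega$ have a very restricted shape, because $\Omega$ is a convex hull of a point lying in $\RR^d\times\{1\}$ (namely $\Pc\times\{1\}$) and a point lying in $\RR^d\times\{-1\}$ (namely $-\Qc\times\{-1\}$). So first I would fix notation: write a supporting hyperplane of a facet $F$ of $\Omega$ as $\langle (\cb,c_{d+1}),(\xb,x_{d+1})\rangle = b$ for $(\xb,x_{d+1})\in\RR^{d+1}$, and normalize so that the inequality $\langle(\cb,c_{d+1}),-\rangle\le b$ holds on all of $\Omega$. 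Since $\mathbf 0$ is interior, $b>0$, so I may rescale to $b=1$; the content is then to show $\cb\in\ZZ^d$ and $c_{d+1}\in\ZZ$ for every facet, which is exactly the statement that $\Omega^\vee$ is a lattice polytope.

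Next I would split facets into three types according to how $F$ meets the two "ends": either $F\subseteq \RR^d\times\{1\}$ (so $F=\Pc\times\{1\}$), or $F\subseteq\RR^d\times\{-1\}$ (so $F=-\Qc\times\{-1\}$), or $F$ is a "mixed" facet whose vertex set contains both some $(\vb,1)$ with $\vb\in\Pc$ a vertex and some $(-\wb,-1)$ with $\wb\in\Qc$ a vertex. The two pure facets are unimodular copies of $\Pc$ and $\Qc$ respectively — $\Pc\times\{1\}$ is cut out by $x_{d+1}\le 1$ and $-\Qc\times\{-1\}$ by $-x_{d+1}\le 1$ — so their facet normals $(\mathbf 0,1)$ and $(\mathbf 0,-1)$ are visibly integral. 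For a mixed facet, the hyperplane $\langle(\cb,c_{d+1}),-\rangle=1$ passes through $(\vb,1)$ and $(-\wb,-1)$, giving $\langle\cb,\vb\rangle+c_{d+1}=1$ and $-\langle\cb,\wb\rangle-c_{d+1}=1$, hence $\langle\cb,\vb+\wb\rangle=0$ and $c_{d+1}=1-\langle\cb,\vb\rangle$. So it suffices to show $\cb\in\ZZ^d$; then $c_{d+1}\in\ZZ$ follows. Here is where the $(0,1,2)$-hypothesis enters decisively: the vertices of $\Pc$ and $\Qc$ are among $\mathbf 0,\eb_i,\eb_i+\eb_j$, so a facet $F$ of $\Omega$ that is mixed is spanned by $d+1$ affinely independent points drawn from $\{(\vb,1)\}\cup\{(-\wb,-1)\}$ with $\vb,\wb$ of this special form. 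I would set up the linear system saying $\cb$ is orthogonal to the $d$ edge-differences within $F$; the matrix of this system has entries in $\{0,\pm1,\pm2\}$ with rows supported on at most two coordinates (differences of the special vectors), and I want to argue its relevant minors force $\cb$ to be integral. Concretely, since the facet contains points with last coordinate $+1$ and points with last coordinate $-1$, after translating we get $d$ vectors in $\RR^{d+1}$ spanning the facet direction; pairing the $+1$ row with each $-1$ row produces vectors of the form $(\vb+\wb,2)$ with $\vb+\wb$ again a $\{0,1,2\}$-combination of at most four unit vectors, and taking differences among the $+1$ rows (resp.\ $-1$ rows) gives vectors $(\vb-\vb',0)$ of a similarly restricted form; the determinant computation controlling $\cb$ then reduces to unimodularity-type facts about small integer matrices whose columns are differences of $0/1/2$-vectors of bounded support.

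The remaining, easier, task is the interior-lattice-point condition. I would show that any lattice point $(\xb,x_{d+1})$ in the interior of $\Omega$ must have $-1<x_{d+1}<1$, hence $x_{d+1}=0$; and then $(\xb,0)$ interior to $\Omega$ forces $\xb$ to lie in the relative interior of $\frac12(\Pc + (-\Qc)) = \frac12(\Pc-\Qc)$ (the "waist" cross-section at height $0$), which, again using that all vertices of $\Pc,\Qc$ come from $\{\mathbf 0,\eb_i,\eb_i+\eb_j\}$, only contains the single lattice point $\mathbf 0$ — here one notes every entry of a point of $\frac12(\Pc-\Qc)$ lies strictly between $-1$ and $1$ except possibly on the boundary, so an interior integer point is $\mathbf 0$. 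I expect the genuine obstacle to be the integrality of the mixed-facet normals: one must see that the special shape of the vertices is \emph{exactly} what makes the associated $(d\times d)$ integer systems behave unimodularly, and packaging this cleanly — probably by exhibiting, for each mixed facet, an explicit unimodular change of coordinates after which the facet normal is manifestly a $0/1$ vector, mirroring the column-deletion constructions already used in Section~1 for edge polytopes — is the crux. Once that is in hand, $\Omega(\Pc,\Qc)$ is reflexive, and the final sentence is the special case $\Pc=\Qc$, for which $\mathbf 0$ is automatically interior by the symmetry of $\Omega(\Pc)$ under $(\xb,x_{d+1})\mapsto(-\xb,-x_{d+1})$.
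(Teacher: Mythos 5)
You have made the right reduction---the two horizontal facets are trivial and everything hinges on showing that the normal vector of a \emph{mixed} facet is integral---but that is exactly the step you leave open, and the route you sketch for it would not succeed as stated. You hope to conclude from ``unimodularity-type facts'' about the integer matrices attached to a mixed facet, ideally via a unimodular change of coordinates after which the normal is ``manifestly a $0/1$ vector.'' These matrices are \emph{not} unimodular. Already for $d=1$ and $\Pc=\Qc=\textnormal{conv}\{\mathbf 0,2\eb_1\}$, the mixed facet of $\Omega(\Pc)$ through $(2,1)$ and $(0,-1)$ has vertex matrix of determinant $-2$, whose inverse has the non-integral first row $(1/2,\,1/2)$; in general the $(d+1)\times(d+1)$ vertex matrix $V$ of a mixed facet has $|\det V|$ equal to a power of $2$, so Cramer's rule applied to $V(a_1,\dots,a_{d+1})^{\top}=(1,\dots,1)^{\top}$ does not by itself give integrality of the $a_i$, and no unimodular coordinate change will make the individual entries of $V^{-1}$ integral. (A smaller slip: differences of vertices of the form $\eb_i+\eb_j$ can be supported on four coordinates, not two.)

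What actually closes the gap in the paper is a cancellation your sketch does not contain. Negate the rows of $V$ coming from $-\Qc\times\{-1\}$ to obtain a matrix $W$ all of whose rows are $\{0,1,2\}$-vectors with last entry $1$ and first-$d$-coordinate sum at most $2$; a Smith-normal-form style induction (Proposition~\ref{mat}) shows $W\thicksim\mathrm{diag}(1,\dots,1,2,\dots,2)$, hence $2W^{-1}$ is an integer matrix---which is the most one can say, and is strictly weaker than unimodularity. Since $W(0,\dots,0,1)^{\top}=(1,\dots,1)^{\top}$, each row sum of $W^{-1}$ is an integer; and the normal entry $a_i$, which is the $i$th row sum of $V^{-1}$, differs from that integer by twice a partial row sum of $W^{-1}$, hence by an integer because $2W^{-1}$ is integral. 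That is how $a_i\in\ZZ$ is obtained. Your discussion of the unique interior lattice point is fine (and is in any case automatic once the dual is known to be a lattice polytope), but without Proposition~\ref{mat} and this row-sum argument the proof is incomplete precisely at the point you yourself identify as the crux.
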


For two $d \times d$ integer matrices $A, B$, we write $A \thicksim B$ if $B$ can be obtained from $A$ by some row and column operations over $\ZZ$. 
In order to prove Theorem \ref{main}, we will need the following lemma and proposition.

\begin{Lemma}[{\cite[Corollary 35.6]{HibiRedBook}}]
	\label{facet}
	Let $\Pc \subset \RR^d$ be a lattice polytope of
	dimension $d$ containing the origin in its interior. 
	Then a point $\ab \in \RR^d$ is a vertex of $\Pc^\vee$ if 
	and only if $\Hc \cap \Pc$ is a facet of $\Pc$,
	where $\Hc$ is the hyperplane
	$$\left\{ \xb \in \RR^d \ | \ \langle \ab, \xb \rangle =1 \right\}$$
	in $\RR^d$.
\end{Lemma}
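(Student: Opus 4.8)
The plan is to prove the two implications separately, working from the polar-duality description of $\Pc^\vee$ in terms of the vertices of $\Pc$. First I would reduce the defining condition of $\Pc^\vee$ to finitely many inequalities: since $\Pc$ is the convex hull of its vertices $\vb_1,\dots,\vb_m$ and $\langle \xb,\yb\rangle$ is linear in $\xb$, one has
\[
\Pc^\vee = \{ \yb \in \RR^d : \langle \vb_k,\yb\rangle \leq 1 \text{ for } 1\leq k\leq m \}.
\]
Because $\Pc$ is bounded and contains $\mathbf{0}$ in its interior, $\Pc^\vee$ is again a bounded polytope containing $\mathbf{0}$ in its interior; in particular its vertices coincide with its extreme points, and $\ab\neq\mathbf{0}$ for every vertex $\ab$. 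Throughout I write $F_{\ab}:=\Pc\cap\Hc=\{\xb\in\Pc : \langle\ab,\xb\rangle=1\}$, which for $\ab\in\Pc^\vee$ is a (possibly empty) face of $\Pc$, since then $\langle\ab,\cdot\rangle\leq 1$ on $\Pc$.

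For the direction \emph{facet $\Rightarrow$ vertex}, I would assume $F_{\ab}$ is a facet. First note $\ab\in\Pc^\vee$: as $F_{\ab}$ is a facet, $\Hc$ supports $\Pc$, and since $\langle\ab,\mathbf{0}\rangle=0<1$ with $\mathbf{0}$ interior, $\Pc$ lies in $\{\langle\ab,\cdot\rangle\leq 1\}$. Being $(d-1)$-dimensional, $F_{\ab}$ contains $d$ affinely independent points $\xb_1,\dots,\xb_d$; as they lie in the hyperplane $\Hc$, which misses the origin, they are in fact linearly independent (a relation $\sum_i\lambda_i\xb_i=\mathbf{0}$ gives $\sum_i\lambda_i=\langle\ab,\sum_i\lambda_i\xb_i\rangle=0$, so all $\lambda_i=0$ by affine independence). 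Then if $\ab=\tfrac{1}{2}(\bb+\bb')$ with $\bb,\bb'\in\Pc^\vee$, the inequalities $\langle\bb,\xb_i\rangle\leq 1$, $\langle\bb',\xb_i\rangle\leq 1$ together with $\langle\ab,\xb_i\rangle=1$ force $\langle\bb,\xb_i\rangle=\langle\bb',\xb_i\rangle=1$ for all $i$, and linear independence of the $\xb_i$ yields $\bb=\bb'=\ab$. Hence $\ab$ is an extreme point, i.e.\ a vertex, of $\Pc^\vee$.

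For the converse \emph{vertex $\Rightarrow$ facet}, I would first check $F_{\ab}\neq\emptyset$: if $c:=\max_{\xb\in\Pc}\langle\ab,\xb\rangle<1$ (note $c>0$, since $\ab\neq\mathbf{0}$ and $\mathbf{0}$ is interior to $\Pc$), then $\tfrac{1}{c}\ab\in\Pc^\vee$ and $\ab=c\cdot(\tfrac{1}{c}\ab)+(1-c)\cdot\mathbf{0}$ exhibits $\ab$ as a proper convex combination of points of $\Pc^\vee$, contradicting extremality; so $c=1$ and $F_{\ab}$ is a nonempty proper face. It remains to rule out $\dim F_{\ab}\leq d-2$. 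Since $F_{\ab}\subseteq\Hc$ and $\mathbf{0}\notin\Hc$, its affine hull avoids $\mathbf{0}$, so $L:=\operatorname{span}(F_{\ab})$ has dimension $\dim F_{\ab}+1\leq d-1$; thus $L^\perp\neq\{\mathbf{0}\}$ and I may pick $\zb\neq\mathbf{0}$ with $\langle\zb,\xb\rangle=0$ for all $\xb\in F_{\ab}$. For vertices $\vb_k\in F_{\ab}$ one has $\langle\ab\pm\epsilon\zb,\vb_k\rangle=1$ (as $\vb_k\in L$), while for vertices $\vb_k\notin F_{\ab}$ one has $\langle\ab,\vb_k\rangle<1$ strictly; since there are finitely many vertices, a sufficiently small $\epsilon>0$ makes $\langle\ab\pm\epsilon\zb,\vb_k\rangle\leq 1$ for every $k$, so $\ab\pm\epsilon\zb\in\Pc^\vee$. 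Then $\ab=\tfrac{1}{2}\bigl((\ab+\epsilon\zb)+(\ab-\epsilon\zb)\bigr)$ is a proper convex combination of two distinct points of $\Pc^\vee$, contradicting that $\ab$ is a vertex. Hence $\dim F_{\ab}=d-1$.

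The main obstacle is this last step of the converse. The crux is the linear-algebraic observation that a face lying in a hyperplane missing the origin spans a linear subspace of dimension exactly one larger than its own; this is what guarantees a nonzero normal direction $\zb$ along which $\ab$ can be perturbed, and the finiteness of the vertex set is what lets me choose a single $\epsilon$ keeping both perturbations inside $\Pc^\vee$.
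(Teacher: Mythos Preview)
The paper does not give its own proof of this lemma; it is quoted verbatim from Hibi's book \cite[Corollary~35.6]{HibiRedBook} and used as a black box in the proof of Theorem~\ref{main}. So there is nothing to compare against.

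Your argument is correct and is essentially the standard proof of the vertex--facet correspondence under polar duality. Both implications are handled cleanly: in the facet $\Rightarrow$ vertex direction, the key step of upgrading $d$ affinely independent points on $\Hc$ to $d$ linearly independent vectors (using $\mathbf{0}\notin\Hc$) is exactly what is needed to pin down $\ab$ uniquely. In the vertex $\Rightarrow$ facet direction, the two sub-steps---showing $c=1$ via the segment from $\mathbf{0}$ to $\ab/c$, and then perturbing along $\zb\in L^{\perp}$ to rule out $\dim F_{\ab}\le d-2$---are correct; your remark that $\dim\operatorname{span}(F_{\ab})=\dim F_{\ab}+1$ because $\mathbf{0}\notin\operatorname{aff}(F_{\ab})$ is precisely the point that makes a nonzero $\zb$ available. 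The only background facts you invoke without proof (that $\Pc^\vee$ is bounded when $\mathbf{0}\in\operatorname{int}\Pc$, and that extreme points of a polytope are vertices) are entirely standard and appropriate to take for granted here.
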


\begin{Proposition}
	\label{mat}
	Let $A=(a_{ij})_{1 \leq i,j \leq d} \in \{0,1,2\}^{d \times d}$ be a $d \times d$ integer matrix such that each row vector $\ab_i=(a_{i1},\ldots,a_{id})$ of $A$ satisfies the  following conditions:
	\begin{itemize}
		\item $a_{id} = 1$;
		\item $|a_{i1}+\cdots+a_{id-1}| \leq 2$.
	\end{itemize}
	If $\textnormal{det}(A) \neq 0$,
	then, for some integer $0 \leq s \leq d$, $$A \thicksim
	\begin{array}{rcccccccccll}
	\ldelim({7}{4pt}[]  &1   &       &       &       &     && &   \rdelim){7}{4pt}[] & \rdelim\}{3}{10pt}[$s$]\\
	&     &\ddots&       &       &\multicolumn{2}{c}{$\mbox{\smash{\Huge $0$}}$}     &  && \\
	&     &       &1       &       &        && &\\
	&     &       &       &2       &        &&  &\\
	&\multicolumn{2}{c}{$\mbox{\smash{\Huge $0$}}$} & &       &       &\ddots &&&  \\
	&     &       &        &       &          &  &2 &\\
	\end{array}.
	$$
	In this case, one has $2A^{-1} \in \ZZ^{d \times d}$.
\end{Proposition}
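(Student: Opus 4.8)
The plan is to recast the statement homologically. Since $\det(A)\neq 0$, the sublattice $L\subseteq\ZZ^{d}$ spanned by the rows $\ab_{1},\dots,\ab_{d}$ of $A$ has finite index, and the invariant factors of the finite abelian group $G:=\ZZ^{d}/L$ are exactly the elementary divisors $d_{1}\mid d_{2}\mid\cdots\mid d_{d}$ of $A$. I claim it suffices to prove that $G$ is annihilated by $2$. Indeed, if every $d_{i}$ divides $2$ then, because $d_{1}\mid\cdots\mid d_{d}$, the Smith normal form of $A$ is $D:=\textnormal{diag}(1,\dots,1,2,\dots,2)$ with $s:=\#\{i:d_{i}=1\}$ ones followed by $d-s$ twos, so $A\thicksim D$, which is the asserted form; and writing $A=PDQ$ with $P,Q$ unimodular gives $2A^{-1}=Q^{-1}(2D^{-1})P^{-1}$ with $2D^{-1}=\textnormal{diag}(2,\dots,2,1,\dots,1)\in\ZZ^{d\times d}$, so $2A^{-1}\in\ZZ^{d\times d}$. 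Hence the whole proposition follows once we show $2G=0$.

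Next I would make the combinatorics explicit. As $a_{id}=1$ for every $i$, the last column of $A$ is the all-ones vector; writing $\ab_{i}=\bb_{i}+\eb_{d}$, the two hypotheses say precisely that each $\bb_{i}$ lies in
$$\mathcal{V}:=\{\mathbf{0}\}\cup\{\eb_{j}:1\le j\le d-1\}\cup\{\eb_{j}+\eb_{k}:1\le j<k\le d-1\}\cup\{2\eb_{j}:1\le j\le d-1\}\subset\ZZ^{d-1},$$
and $\det(A)\neq 0$ forces the $\bb_{i}$ to be pairwise distinct. Presenting $G=\ZZ^{d}/L$ by the generators $\eb_{1},\dots,\eb_{d}$ and the relations $\bb_{i}+\eb_{d}=0$, one uses the relation for $i=1$ to eliminate $\eb_{d}$ and obtains $G\cong\ZZ^{d-1}/\Lambda$, where $\Lambda$ is the $\ZZ$-span of all differences $\bb_{i}-\bb_{j}$; since $G$ is finite, $\Lambda$ has full rank, i.e.\ $T:=\{\bb_{1},\dots,\bb_{d}\}$ affinely spans $\RR^{d-1}$. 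So it is enough to prove, by induction on $N\ge 0$, the following self-strengthened claim: \emph{for the analogous set $\mathcal{V}_{N}=\{\mathbf{0}\}\cup\{\eb_{j}\}\cup\{\eb_{j}+\eb_{k}\}\cup\{2\eb_{j}\}\subset\ZZ^{N}$ and any $T\subseteq\mathcal{V}_{N}$ affinely spanning $\RR^{N}$, the group $\ZZ^{N}/\langle\bb-\bb':\bb,\bb'\in T\rangle$ has exponent dividing $2$.} The base case $N=0$ is trivial.

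For the inductive step I would distinguish cases. If some difference $\bb-\bb'$ ($\bb,\bb'\in T$) equals $\pm\eb_{j}$, then $\eb_{j}$ lies in the difference lattice, so the quotient is isomorphic to $\ZZ^{N-1}/\langle\overline{\bb}-\overline{\bb'}\rangle$, where $\overline{(\,\cdot\,)}$ deletes the $j$-th coordinate; one checks that deleting a coordinate sends $\mathcal{V}_{N}$ into $\mathcal{V}_{N-1}$ and that the image of $T$ still affinely spans $\RR^{N-1}$, so the inductive hypothesis applies. If no difference of two elements of $T$ is $\pm$ a standard basis vector, then a short inspection shows $\bb-\bb'\in\{\pm\eb_{j}\}$ can only come from the pairs $\{\mathbf{0},\eb_{j}\}$, $\{\eb_{j},2\eb_{j}\}$ and $\{\eb_{j},\eb_{j}+\eb_{k}\}$, and two sub-cases remain. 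If $\mathbf{0}\in T$, then no $\eb_{j}$ lies in $T$, so $T\setminus\{\mathbf{0}\}$ consists only of vectors $\eb_{j}+\eb_{k}$ (``edges'') and $2\eb_{j}$ (``loops''), and these generate the difference lattice; thus one is looking at the integer lattice spanned by the incidence vectors of a loop-graph on $[N]$, and the full-rank hypothesis forces every connected component to be non-bipartite or to carry a loop. In each component one then obtains $2\eb_{v}$ in the lattice for every vertex $v$ — from a loop directly, or as the alternating sum of the edge vectors around an odd cycle — and propagates these along a spanning tree, so that $2\ZZ^{N}$ lies in the difference lattice. If instead $\mathbf{0}\notin T$, then since the elements of $\mathcal{V}_{N}$ of coordinate-sum $\ge 2$ all lie on an affine hyperplane while $T$ affinely spans $\RR^{N}$, some $\eb_{j_{0}}$ lies in $T$; the ``no unit difference'' hypothesis then forces $\eb_{j_{0}}$ to be the only element of $T$ with nonzero $j_{0}$-th coordinate, so using the relation $\eb_{j_{0}}\equiv\bb'$ (any other $\bb'\in T$) to eliminate $\eb_{j_{0}}$ identifies the quotient with $\ZZ^{N-1}/\langle T'-T'\rangle$ for $T':=T\setminus\{\eb_{j_{0}}\}\subseteq\mathcal{V}_{N-1}$, which still affinely spans $\RR^{N-1}$; the inductive hypothesis again closes the case.

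The routine part of all this is the bookkeeping that every reduction step preserves the hypothesis ``$T\subseteq\mathcal{V}$ and $T$ affinely spans'' (using that surjective affine maps preserve affine spanning and that coordinate deletion respects $\mathcal{V}$), together with the finite verification that makes the case split exhaustive. The one genuinely non-formal ingredient, and the step I expect to be the main obstacle, is the graph-theoretic sub-case $\mathbf{0}\in T$: recognizing the difference lattice as an incidence lattice, using the standard equivalence between full column rank of such a matrix and ``every component non-bipartite or with a loop,'' and performing the alternating-sum computation around an odd cycle to extract an element $2\eb_{v}$. Everything else is careful but direct.
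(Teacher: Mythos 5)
Your proposal is correct, but it takes a genuinely different route from the paper. The paper proves Proposition \ref{mat} by a direct induction on $d$: it classifies the rows into four types, and in each of three cases performs explicit integer row and column operations to split off a $1\times 1$ block and reduce to a $(d-1)\times(d-1)$ matrix of the same shape, reading off the diagonal form (and hence $2A^{-1}\in\ZZ^{d\times d}$) at the end. You instead observe via the Smith normal form that the entire statement is equivalent to the single assertion that the cokernel $\ZZ^{d}/L$ of the row lattice is killed by $2$, rewrite that cokernel as $\ZZ^{d-1}/\langle \bb_i-\bb_j\rangle$ for a set $T=\{\bb_1,\dots,\bb_d\}$ of $(0,1,2)$-vectors of coordinate sum at most $2$, and prove a strengthened claim (for arbitrary affinely spanning subsets $T$ of that vertex set) by induction on the ambient dimension. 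Your case analysis is sound: the reductions when some difference is $\pm\eb_j$, or when an isolated $\eb_{j_0}$ can be eliminated, correctly preserve both the membership in $\mathcal{V}_{N-1}$ and the affine spanning property, and the remaining case ($\mathbf{0}\in T$, no unit differences) correctly identifies the difference lattice with the lattice spanned by incidence vectors of a graph with loops, where full rank forces each component to carry an odd cycle or a loop and the alternating sum around an odd cycle produces $2\eb_v$. What your approach buys is conceptual transparency: it explains where the $2$'s come from (odd cycles and loops, exactly the phenomenon behind Lemma \ref{dim} and the Valencia--Villarreal rank formula that the paper already invokes), and the strengthened induction hypothesis avoids the paper's large explicit matrix manipulations. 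What it costs is that the ``routine bookkeeping'' you defer --- the exhaustive check of which pairs in $\mathcal{V}_N$ have unit difference, the verification that coordinate deletion and elimination preserve the hypotheses, and the rank statement for incidence lattices of graphs with loops --- would all need to be written out (or the last one cited) for a complete proof, so the two arguments end up comparable in length even though yours is structurally cleaner.
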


Since the proof of Proposition \ref{mat} is rather technical,
we save it until the end of this section.
We now prove Theorem \ref{main} and discuss a quick corollary.

\begin{proof}[Proof of Theorem \ref{main}]
Let $\Fc$ be a facet of $\Omega(\Pc, \Qc)$.
Then there exist $d+1$ vertices $\vb_1,\ldots,\vb_{d+1}$ of $\Omega(\Pc, \Qc)$ such that
$\textnormal{aff}(\vb_1,\ldots,\vb_{d+1}) \cap \Omega(\Pc, \Qc)=\Fc$,
where for a subset $A \subset \RR^{d+1}$, $\text{aff}(A)$ is the affine space generated by $A$.
Since the origin of $\RR^{d+1}$ belongs to the interior of $\Omega(\Pc, \Qc)$,
there exist rational numbers $a_1,\ldots,a_{d+1}$ such that 
$$\textnormal{aff}(\vb_1,\ldots,\vb_{d+1})=\{(x_1,\ldots,x_{d+1}) \in \RR^{d+1} \mid a_1x_1+\cdots+a_{d+1}x_{d+1} =1	 \}.$$
By Lemma \ref{facet}, we need to show that $a_1,\ldots,a_{d+1} \in \ZZ$.
Let $V$ be the $(d+1) \times (d+1)$ integer matrix whose $i$th row vector is $\vb_i$.
Then we have $\det(V) \neq 0$ and 
	\begin{displaymath}
V
\begin{pmatrix}
a_1\\
\vdots\\
a_{d+1}
\end{pmatrix}
=
\begin{pmatrix}
1\\
\vdots\\
1
\end{pmatrix}.
\end{displaymath}
Hence each  $a_i$ is the sum of all entries in the $i$th row  vector of $V^{-1}$.
If $\vb_{1},\ldots,\vb_{d+1}$ are vertices of $\Pc \times \{1\}$,
then one has $a_1=\cdots=a_d=0$ and $a_{d+1}=1$.
Similarly, if $\vb_{1},\ldots,\vb_{d+1}$ are vertices of $-\Qc \times \{-1\}$,
then one has $a_1=\cdots=a_d=0$ and $a_{d+1}=-1$.

Now, we assume that for some positive integer $1 \leq k \leq d$, 
$\vb_1,\ldots, \vb_k$ are vertices of $\Pc \times \{1\}$ and 
$\vb_{k+1},\ldots, \vb_{d+1}$ are vertices of $-\Qc \times \{-1\}$. 
Let $W$ be the $(d+1)\times (d+1)$ integer matrix 
such that for $1 \leq i \leq k$, the $i$th row vector is $\vb_i$
and for $k+1 \leq j \leq d+1$, the $j$th row vector is $-\vb_j$.
Then one has $\det(W) \neq 0$ and 
	\begin{displaymath}
W
\begin{pmatrix}
0\\
\vdots\\
0\\
1
\end{pmatrix}
=
\begin{pmatrix}
1\\
\vdots\\
1\\
1
\end{pmatrix}.
\end{displaymath}
Hence for any $1 \leq i \leq d+1$, the sum of all entries in the $i$th row vector of $W^{-1}$ is an integer.
Moreover, for $k+1 \leq i \leq d+1$, the $i$th column vector of $V^{-1}$  coincides with the $i$th column vector of $W^{-1}$ times $(-1)$.
Thus, since $2W^{-1}$ is an integer matrix from Proposition \ref{mat},
we know that for any $1 \leq i \leq d+1$, the sum of all entries in the $i$th row vector of $V^{-1}$ is an integer.
Indeed, let $(w_{i1},\ldots,w_{id+1})$ be the $i$th row vector of $W^{-1}$
and set $w=w_{i1}+\cdots+w_{id+1} \in \ZZ$.
Then since $a_i=w_{i1}+\cdots+w_{ik}-(w_{ik+1}+\cdots+w_{id+1})$ and $2W^{-1}$ is an integer matrix,
one has $w-a_i=2(w_{ik+1}+\cdots+w_{id+1}) \in \ZZ$.
Hence we obtain $a_i \in \ZZ$.
Therefore, by Lemma \ref{facet},  $\Omega(\Pc, \Qc)$ is reflexive, as desired.
\end{proof}

For a lattice polytope $\Pc \subset \RR^n$ of dimension $d$,
we say that $\Pc$ is reflexive if a full-dimensional unimodularly equivalent copy $\Qc \subset  \RR^d$ of $\Pc$ is reflexive. 
By Theorem \ref{main},
we can give a new class of reflexive polytopes arising from the edge polytopes of finite simple graphs
and we can determine the reflexive dimensions of the edge polytopes of finite simple graphs.
\begin{Corollary}
	\label{Cor:ref}
	Let $G$ be a finite simple graph on $[n]$.
	Then $\Omega(\Pc_{G}, \Pc_{G})$ is reflexive.
	Moreover, $\Pc_G$ is unimodularly equivalent to a facet of 
	$\Omega(\Pc_{G}, \Pc_{G})$.
\end{Corollary}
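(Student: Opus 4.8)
The plan is to deduce the corollary from Theorem~\ref{main} by passing to the explicit full-dimensional model $\Qc_G$ of $\Pc_G$ constructed in Section~1, and then transporting the conclusion back to $\Omega(\Pc_G,\Pc_G)$ along a unimodular equivalence.

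First I would verify that $\Qc_G$ satisfies the hypotheses of Theorem~\ref{main}. Recall from Section~1 that $\Qc_G$ is the convex hull of the row vectors of $A_G$ with a prescribed set of columns deleted, and that there are a unimodular matrix $T$ (namely $U$ if $G$ is not bipartite, and $VU$ otherwise), a constant $c\in\{1,2\}$, and an integer $m\geq 1$ with $f_T(\Pc_G)=\{c\}^m\times\Qc_G$. Since every row of $A_G$ has the form $\eb_i+\eb_j$ with $i\neq j$, deleting coordinates turns it into one of $\mathbf{0}$, $\eb_k$, or $\eb_k+\eb_\ell$ with $k\neq\ell$; hence every vertex of $\Qc_G$ lies in $\{\mathbf{0}\}\cup\{\eb_k\mid 1\le k\le d\}\cup\{\eb_k+\eb_\ell\mid 1\le k\le\ell\le d\}$. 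Moreover $\Qc_G\subset\RR^d$ is full-dimensional with $d=\dim\Pc_G$ by Lemma~\ref{dim}. Thus Theorem~\ref{main}, applied with $\Pc=\Qc=\Qc_G$, gives that $\Omega(\Qc_G,\Qc_G)=\Omega(\Qc_G)$ is reflexive.

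Next I would show that $\Omega(\Pc_G,\Pc_G)$ is unimodularly equivalent to $\Omega(\Qc_G)$. Applying the unimodular automorphism $f_T\times\id_{\RR}$ of $\RR^{n+1}$ and using linearity of $f_T$ (so $f_T(-\Pc_G)=-f_T(\Pc_G)$), one sees that $\Omega(\Pc_G,\Pc_G)$ is unimodularly equivalent to
$$\textnormal{conv}\bigl\{\bigl((\{c\}^m\times\Qc_G)\times\{1\}\bigr)\cup\bigl((\{-c\}^m\times(-\Qc_G))\times\{-1\}\bigr)\bigr\}.$$
This polytope is contained in the linear subspace of $\RR^{n+1}$ on which each of the first $m$ coordinates equals $c$ times the last coordinate; subtracting $c$ times the last coordinate from each of the first $m$ coordinates is a unimodular transformation making those coordinates identically zero, and deleting them leaves exactly $\Omega(\Qc_G)\subset\RR^{d+1}$. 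Since $\Omega(\Qc_G)$ is full-dimensional of dimension $d+1$ and reflexive, the definition of reflexivity for lattice polytopes that are not full-dimensional, stated just before the corollary, yields that $\Omega(\Pc_G,\Pc_G)$ is reflexive.

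For the final assertion, note that since $\Qc_G$ is full-dimensional, the remark preceding Theorem~\ref{main} says that $\Qc_G$ (more precisely $\Qc_G\times\{1\}$) is a facet of $\Omega(\Qc_G)$; under the unimodular equivalence above it corresponds to $\Pc_G\times\{1\}$, which is therefore a facet of $\Omega(\Pc_G,\Pc_G)$ and is unimodularly equivalent to $\Pc_G$. I do not anticipate a real obstacle here; the only point requiring care is the bookkeeping in the bipartite case, namely that the \emph{same} columns are removed from the $\Pc_G$-copy and from the $-\Pc_G$-copy, which is automatic because the single linear map $f_T$ is applied to all of $\Omega(\Pc_G,\Pc_G)$ at once.
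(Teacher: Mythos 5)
Your proposal is correct and follows essentially the same route as the paper: pass to the full-dimensional copy $\Qc_G$, check its vertices lie in $\{\mathbf{0}\}\cup\{\eb_i\}\cup\{\eb_i+\eb_j\}$, apply Theorem~\ref{main}, and transport the conclusion back via the unimodular equivalence between $\Omega(\Qc_G)$ and $\Omega(\Pc_G,\Pc_G)$. The only difference is that you spell out the equivalence $\Omega(\Pc_G,\Pc_G)\cong\Omega(\Qc_G)$ explicitly, whereas the paper dismisses it as ``easy to see.''
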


\begin{proof}
	Let $d$ be the dimension of $\Pc_G$ and $\Qc_G \subset \RR^d$ a full-dimensional unimodularly equivalent copy of $\Pc_G$ as defined in Section 1 (see Example \ref{full}).
	Then all of its vertices of $\Qc_G$ belong to 
		$$\{\mathbf{0}\} \cup \{\eb_i \mid 1 \leq i \leq d \} \cup\{\eb_i+\eb_j \mid 1 \leq i \leq j \leq d \} \subset \RR^d.$$
	Hence, it follows from Theorem \ref{main} that
	$\Omega(\Qc_G)$ is reflexive.
	Moreover  it is easy to see that $\Omega(\Qc_G)$ is a full-dimensional unimodularly equivalent copy of $\Omega(\Pc_G)$.
	Therefore, we know that  $\Omega(\Pc_G)$ is reflexive and 
	$\Pc_G$ is unimodularly equivalent to a facet of $\Omega(\Pc_G)$.
\end{proof}

We now end the section with a proof of Proposition \ref{mat}.
\begin{proof}[Proof of Proposition \ref{mat}]
	We prove this proposition by induction on $d$, i.e., the size of $A$.
	When $d=1$, the claim is trivial. 
	Suppose that $d >1$.
	We should show that for a given $d \times d$ integer matrix $A$ satisfying the assumption of the proposition, one can obtain a matrix from $A$ by some row and column operations over $\ZZ$ as the following:
	$$A \thicksim 
	\left(
	\begin{array}{c|ccc}
	1&0&\cdots&0\\
	\cline{1-4}
	0&&&\\
	\vdots&&\vbig{A'}\\
	0&&&\\
	\end{array}
	\right),$$\\
	where $A'$ is a $(d-1) \times (d-1)$ integer matrix satisfying the assumption of the proposition.
	
	We will divide the proof into some cases. Note that by the assumption, 
	no two row vectors in $A$ are the same,
	and each row vector $\ab_i$ of $A$ is one of the following:
	\begin{itemize}
		\item[(Type 1)] $\ab_i = (0,\ldots,0,1,0, \ldots,0, 1,0, \ldots,0, 1) $, i.e., for some $1 \leq j_1 < j_2 \leq d-1, a_{ij_1}=a_{ij_2}=a_{id}=1$, and for other $j,  a_{ij}=0$;
		\item[(Type 2)] $\ab_i = (0,\ldots,0, 1,0, \ldots,0, 1)$, i.e., for some $1 \leq j_1 \leq d-1, a_{ij_1}=a_{id}=1$, and for other $j,  a_{ij}=0$;
		\item[(Type 3)] $\ab_i = (0,\ldots,0,  2,0, \ldots,0, 1) $, i.e., for some $1 \leq j_1 \leq d-1, a_{ij_1}=2, a_{id}=1$, and for other $j, a_{ij}=0$;
		\item[(Type 4)] $\ab_i = (0,\dots,0,1) $, i.e., for any $1 \leq j \leq d-1, a_{ij}=0$, and $a_{id}=1$. 
	\end{itemize}
	We can then divide the proof into the following cases:
	\begin{itemize}
		\item[(1)] $A$ does not have any row vectors of Type 1 and Type 2;
		\item[(2)] $A$ has at least one row vector of Type 2, but $A$ has no row vectors of Type 1;
		\item[(3)] $A$ has at least one row vector of Type 1.
	\end{itemize}
	\smallskip
	The case (1):
	Since each row vector of A is either Type 3 or Type 4 and since $\textnormal{det}(A) \neq 0$, one can obtain the following matrices subsequently from $A$ by some row and column operations over $\ZZ$:
	$$A \thicksim
	\begin{pmatrix}
	2           &           &        &        &           &1 \\
	& \ddots &     &    \multicolumn{2}{c}{$\mbox{\smash{\Huge $0$}}$} &\vdots            \\
	&          & \ddots      &        &           &\vdots   \\
	\multicolumn{2}{c}{$\mbox{\smash{\Huge $0$}}$}  & & & 2 & 1&    \\
	&    &         &        &                &1   \\
	\end{pmatrix}
	\thicksim
	\begin{pmatrix}
	1           &           &        &        &           & \\
	& 2 &     &   & \multicolumn{2}{c}{$\mbox{\smash{\Huge $0$}}$} &            \\
	&          & \ddots      &        &           &   \\
	\multicolumn{2}{c}{$\mbox{\smash{\Huge $0$}}$}  & & & \ddots & &    \\
	&    &         &        &                &2   \\
	\end{pmatrix}.
	$$
	Thus, in this case, we can get the desired matrix from $A$ by some row and column operations over $\ZZ$. 
	\smallskip\\
	The case (2):
	Since $A$ has at least one row vector of Type 2 and $A$ does not have any row vector of Type 1, one can obtain the following matrix $\tilde{A}$ from $A$ by interchanging some rows and columns:
	$$\tilde{A}=
	\begin{array}{rcccccccccccll}
	\ldelim({7}{4pt}[] &1     &0     &\cdots&0        &	&         &           &             &                            &   & 1       &\rdelim){7}{4pt}[] & \rdelim\}{4}{10pt}[$r$]\\
	&0        &1     &\cdots&0        &		&         &           &             &                            &         &1&\\
	&\vdots &      &\ddots&\vdots &	&\multicolumn{4}{c}{$\mbox{\smash{\Huge $0$}}$}  &   &\vdots &\\
	&0        &0     &\cdots&1        &		&	     &           &             &                            &   &1        &\\ 
	\cline{2-12}
	&          &      &         &          &       &          &          &             &                            &   &        &\\ 
	&          &\multicolumn{8}{c}{$\mbox{\smash{\Huge $C$}}$}  &                                              &        &\\ 
	&          &      &         &          &        &         &          &             &                            &   &        &\\
	\end{array},
	$$
	where $r \geq 1$ and $C$ is a $(d-r) \times d$ integer matrix  such that each row vector is either Type 3 or Type 4.
	Now, by interchanging some row vectors of $C$ if necessarily, we can assume the first column vector of $C$ is  
	$(0,\ldots,0)^{\top}$
	or 
	$(2,0,\ldots,0)^{\top}$, where for a row vector $\ab$, $\ab^{\top}$ is the transpose of $\ab$. 
	If the first column vector of $C$ is $(0,\ldots,0)^{\top}$, we can obtain the below matrix from $\tilde{A}$ by a column operation: 
	$$\tilde{A} 
	= 
	\left(
	\begin{array}{ccccccccc}
	&&&&&&&&1\\
	&&&&&&&&1\\
	&\vbig{I_r}&&&\vbig{0}&&\vdots\\
	&&&&&&&&1\\
	\cline{1-9}
	0&&&&&&&&1\\
	0&&&&&&&&1\\
	\vdots&&&\vbig{C'}&&&&\vdots\\
	0&&&&&&&&1\\
	\end{array}
	\right)
	\thicksim
	\left(
	\begin{array}{ccccccccc}
	&&&&&&&&0\\
	&&&&&&&&1\\
	&\vbig{I_r}&&&\vbig{0}&&\vdots\\
	&&&&&&&&1\\
	\cline{1-9}
	0&&&&&&&&1\\
	0&&&&&&&&1\\
	\vdots&&&\vbig{C'}&&&&\vdots\\
	0&&&&&&&&1\\
	\end{array}
	\right)
	$$
	$$
	=:
	\left(
	\begin{array}{c|ccc}
	1&0&\cdots&0\\
	\cline{1-4}
	0&&&\\
	\vdots&&\vbig{A'}\\
	0&&&\\
	\end{array}
	\right),
	$$
	\smallskip\\
	where $I_r$ is the unit matrix of size $r$.
	Moreover, it is clear that $A'$ satisfies the assumption of the proposition.
	Hence, by the inductive hypothesis, one obtains the desired matrix from $A$.
	
	Assume that  the first column vector of $C$ is $(2,0,\ldots,0)^{\top}$.
	Then we can obtain the below matrices from $\tilde{A}$ subsequently by some row and column operations over $\ZZ$:
	$$\tilde{A}=
	\left(
	\begin{array}{ccccccccc}
	&&&&&&&&1\\
	&&&&&&&&1\\
	&\vbig{I_r}&&&\vbig{0}&&\vdots\\
	&&&&&&&&1\\
	\cline{1-9}
	2&0&&&\cdots&&&0&1\\
	\cline{1-9}
	0&&&&&&&&1\\
	\vdots&&&\vbig{C''}&&&&\vdots\\
	0&&&&&&&&1\\
	\end{array}
	\right)
	\thicksim
	\left(
	\begin{array}{ccccccccc}
	&&&&&&&&0\\
	&&&&&&&&1\\
	&\vbig{I_r}&&&\vbig{0}&&\vdots\\
	&&&&&&&&1\\
	\cline{1-9}
	0&0&&&\cdots&&&0&-1\\
	\cline{1-9}
	0&&&&&&&&1\\
	\vdots&&&\vbig{C''}&&&&\vdots\\
	0&&&&&&&&1\\
	\end{array}
	\right)
	$$
	$$
	\thicksim
	\left(
	\begin{array}{ccccccccc}
	&&&&&&&&0\\
	&&&&&&&&1\\
	&\vbig{I_r}&&&\vbig{0}&&\vdots\\
	&&&&&&&&1\\
	\cline{1-9}
	0&0&&&\cdots&&&0&1\\
	\cline{1-9}
	0&&&&&&&&1\\
	\vdots&&&\vbig{C''}&&&&\vdots\\
	0&&&&&&&&1\\
	\end{array}
	\right)
	=:
	\left(
	\begin{array}{c|ccc}
	1&0&\cdots&0\\
	\cline{1-4}
	0&&&\\
	\vdots&&\vbig{A''}\\
	0&&&\\
	\end{array}
	\right).
	$$\smallskip\\
	Moreover, it is clear that $A''$ satisfies the assumption of the proposition.
	Hence, by induction hypothesis, one obtain a desired matrix from $A$.
	\smallskip\\
	The case (3):
	Since $A$ has at least one row vector of Type 1, we can obtain the following matrix from $A$ by interchanging some rows and columns of $A$:
	$$\tilde{A}:=
	\left(
	\begin{array}{cccccc}
	1&1&0&\cdots&0&1\\
	\cline{1-6}
	&&&&&\\
	&&\vbig{B}&&\\
	\cline{1-6}
	&&&&&\\
	&&\vbig{C}&&\\
	\end{array}
	\right),
	$$
	where $B$ is a $m \times d$ integer matrix for some 
	$m \geq 0$ such that each row vector is either Type 1 or Type 2, and $C$ is a $(d-m-1) \times d$  integer matrix such that each row vector is either Type 3 or Type 4. We set $l:=d-m-1$. 
	Let  $B_{\{1,2\}}$ (resp. $C_{\{1,2\}}$) denote the submatrix consisting of the first and second column vectors of $B$ (resp. $C$).
	Here, to prove the claim, we divide into the following subcases:
	\begin{itemize}
		\item[(3-1)] $B_{\{1,2\}}$ is a zero matrix;
		\item[(3-2)] $B_{\{1,2\}}$ is not a zero matrix.
	\end{itemize}
	Note, in both subcases, by a permutation of first row and second row, we can assume $$C_{\{1,2\}} = \begin{pmatrix}
	c_0&c_1\\
	0&c_2\\
	\vdots&\vdots\\
	0&c_l\\
	\end{pmatrix},
	$$ where either $c_0=2$ and $c_1=0$, or $c_0=0$.
	\smallskip\\
	The subcase (3-1):	
	In this case, if $c_0=0$, then we can obtain the below matrix from $\tilde{A}$ by some column operations: 
	
	$$\tilde{A}=
	\left(
	\begin{array}{cc|ccccc|c}
	1&1&0&&\cdots&&0&1\\
	\cline{1-8}
	0&0&&&&&&1\\
	\vdots&\vdots&&&\vbig{B'}&&\vdots\\
	0&0&&&&&&1\\
	\cline{1-8}
	0&c_1&&&&&&1\\
	\vdots&\vdots&&&\vbig{C'}&&\vdots\\
	0&c_l&&&&&&1\\
	\end{array}
	\right)
	\thicksim
	\left(
	\begin{array}{cc|ccccc|c}
	1&0&0&&\cdots&&0&0\\
	\cline{1-8}
	0&0&&&&&&1\\
	\vdots&\vdots&&&\vbig{B'}&&\vdots\\
	0&0&&&&&&1\\
	\cline{1-8}
	0&c_1&&&&&&1\\
	\vdots&\vdots&&&\vbig{C'}&&\vdots\\
	0&c_l&&&&&&1\\
	\end{array}
	\right)
	$$
	$$
	=:
	\left(
	\begin{array}{c|ccc}
	1&0&\cdots&0\\
	\cline{1-4}
	0&&&\\
	\vdots&&\vbig{A'}\\
	0&&&\\
	\end{array}
	\right).
	$$\\
	Moreover, it is clear that $A'$ satisfies the assumption of the proposition. Hence, by the inductive hypothesis, we obtain the desired matrix from $A$.
	Next, if $c_0=2 \ \text{and} \ c_1=0$, then we can obtain the below matrices from $\tilde{A}$ subsequently by some row and column operations: 
	$$\tilde{A}=
	\left(
	\begin{array}{cc|ccccc|c}
	1&1&0&&\cdots&&0&1\\
	\cline{1-8}
	0&0&&&&&&1\\
	\vdots&\vdots&&&\vbig{B''}&&\vdots\\
	0&0&&&&&&1\\
	\cline{1-8}
	2&0&0&&\cdots&&0&1\\
	\cline{3-7}
	0&c_2&&&&&&1\\
	\vdots&\vdots&&&\vbig{C''}&&\vdots\\
	0&c_l&&&&&&1\\
	\end{array}
	\right)
	\thicksim
	\left(
	\begin{array}{cc|ccccc|c}
	1&1&0&&\cdots&&0&1\\
	\cline{1-8}
	0&0&&&&&&1\\
	\vdots&\vdots&&&\vbig{B''}&&\vdots\\
	0&0&&&&&&1\\
	\cline{1-8}
	0&-2&0&&\cdots&&0&-1\\
	\cline{3-7}
	0&c_2&&&&&&1\\
	\vdots&\vdots&&&\vbig{C''}&&\vdots\\
	0&c_l&&&&&&1\\
	\end{array}
	\right)
	$$
	$$
	\thicksim
		\left(
		\begin{array}{cc|ccccc|c}
		1&0&0&&\cdots&&0&0\\
		\cline{1-8}
		0&0&&&&&&1\\
		\vdots&\vdots&&&\vbig{B''}&&\vdots\\
		0&0&&&&&&1\\
		\cline{1-8}
		0&-2&0&&\cdots&&0&-1\\
		\cline{3-7}
		0&c_2&&&&&&1\\
		\vdots&\vdots&&&\vbig{C''}&&\vdots\\
		0&c_l&&&&&&1\\
		\end{array}
		\right)
	\thicksim
	\left(
	\begin{array}{cc|ccccc|c}
	1&0&0&&\cdots&&0&0\\
	\cline{1-8}
	0&0&&&&&&1\\
	\vdots&\vdots&&&\vbig{B''}&&\vdots\\
	0&0&&&&&&1\\
	\cline{1-8}
	0&2&0&&\cdots&&0&1\\
	\cline{3-7}
	0&c_2&&&&&&1\\
	\vdots&\vdots&&&\vbig{C''}&&\vdots\\
	0&c_l&&&&&&1\\
	\end{array}
	\right)$$
	$$
	=:
	\left(
	\begin{array}{c|ccc}
	1&0&\cdots&0\\
	\cline{1-4}
	0&&&\\
	\vdots&&\vbig{A''}\\
	0&&&\\
	\end{array}
	\right).
	$$
	Moreover, it is clear that $A''$ satisfies the assumption of the proposition. Hence, by the inductive hypothesis, we obtain the desired matrix from $A$.
	\smallskip\\
	The subcase (3-2):
	Note, since $\textnormal{det}(A) \neq 0$, each row vector of $B_{\{1,2\}}$ is $(0,1)$, $(1,0)$, or $(0,0)$.
	Thus, by some row permutations of $\tilde{A}$,
	 and if necessary a permutation of the first and second columns, we can assume 
	$$B_{\{1,2\}} = 
	\begin{array}{rccll}
	\ldelim({10}{4pt}[] &0&1&\rdelim){10}{4pt}[]&\rdelim\}{3}{10pt}[$p$] \\
	&\vdots&\vdots& & \\
	&0&1& & \\
	&1&0& & \rdelim\}{3}{10pt}[$q-p$,] \\
	&\vdots&\vdots& & \\
	&1&0& & \\
	&0 &0 & & \rdelim\}{3}{10pt}[$m-q$]\\
	&\vdots&\vdots& & \\
	&0&0&& \\
	\end{array}$$
	where $p \geq 1$ and $q-p \geq 0$.
	
	Let $\tilde{\ab}_2$ be the $2$nd row vector of $\tilde{A}$.
	Then $\tilde{\ab}_2$ is either Type 1 or Type 2.
	If $\tilde{\ab}_2$ is Type 2, we can obtain the below matrices from $\tilde{A}$ subsequently by some row operations over $\ZZ$ as the following: 
	
	$$\tilde{A}=
	\left(
	\begin{array}{cc|ccccc|c}
	1&1&0&&\cdots&&0&1\\
	\cline{1-8}
	0&1&0&&\cdots&&0&1\\
	\cline{3-7}
	0&1&&&&&&1\\
	\vdots&\vdots&&&&&&\vdots\\
	0&1&&&&&&1\\
	1&0&&&&&&1\\
	\vdots&\vdots&&&\vbig{B'}&&\vdots\\
	1&0&&&&&&1\\
	0&0&&&&&&1\\
	\vdots&\vdots&&&&&&\vdots \\
	0&0&&&&&&1\\
	\cline{1-8}
	c_0&c_1&&&&&&1\\
	0&c_2&&&&&&1\\
	\vdots&\vdots&&&\vbig{C'}&&\vdots\\
	0&c_l&&&&&&1\\
	\end{array}
	\right)
	\thicksim
	\left(
	\begin{array}{cc|ccccc|c}
	1&0&0&&\cdots&&0&0\\
	\cline{1-8}
	0&1&0&&\cdots&&0&1\\
	\cline{3-7}
	0&1&&&&&&1\\
	\vdots&\vdots&&&&&&\vdots\\
	0&1&&&&&&1\\
	1&0&&&&&&1\\
	\vdots&\vdots&&&\vbig{B'}&&\vdots\\
	1&0&&&&&&1\\
	0&0&&&&&&1\\
	\vdots&\vdots&&&&&&\vdots \\
	0&0&&&&&&1\\
	\cline{1-8}
	c_0&c_1&&&&&&1\\
	0&c_2&&&&&&1\\
	\vdots&\vdots&&&\vbig{C'}&&\vdots\\
	0&c_l&&&&&&1\\
	\end{array}
	\right)
	$$
	$$
	\thicksim
	\left(
	\begin{array}{cc|ccccc|c}
	1&0&0&&\cdots&&0&0\\
	\cline{1-8}
	0&1&0&&\cdots&&0&1\\
	\cline{3-7}
	0&1&&&&&&1\\
	\vdots&\vdots&&&&&&\vdots\\
	0&1&&&&&&1\\
	0&0&&&&&&1\\
	\vdots&\vdots&&&\vbig{B'}&&\vdots\\
	0&0&&&&&&1\\
	0&0&&&&&&1\\
	\vdots&\vdots&&&&&&\vdots \\
	0&0&&&&&&1\\
	\cline{1-8}
	0&c_1&&&&&&1\\
	0&c_2&&&&&&1\\
	\vdots&\vdots&&&\vbig{C'}&&\vdots\\
	0&c_l&&&&&&1\\
	\end{array}
	\right)
	=:
	\left(
	\begin{array}{c|ccc}
	1&0&\cdots&0\\
	\cline{1-4}
	0&&&\\
	\vdots&&\vbig{A'}\\
	0&&&\\
	\end{array}
	\right).
	$$\\
	Moreover, it is clear that $A'$ satisfies the assumption of the proposition. Hence, by the inductive hypothesis, we obtain the desired matrix from $A$.
	
	Next assume that $\tilde{\ab}_2$ is Type 1.
	Note that $\tilde{\ab}_2$ is a row vector like $(0,1,0, \ldots,0 ,1, 0,\ldots,0 ,1)$. By interchanging columns, we can assume $\tilde{\ab}_2=(0,1,1,0,\ldots,0 ,1)$. Then, we can obtain the below matrices from $\tilde{A}$ subsequently by some row and column operations
	over $\ZZ$ as follows: 
	
	$$\tilde{A}=
	\left(
	\begin{array}{ccc|ccccc|c}
	1&1&0&0&&\cdots&&0&1\\
	\cline{1-9}
	0&1&1&0&&\cdots&&0&1\\
	\cline{4-8}
	0&1&b_2&&&&&&1\\
	\vdots&\vdots&\vdots&&&&&&\vdots\\
	0&1&b_p&&&&&&1\\
	1&0&b_{p+1}&&&&&&1\\
	\vdots&\vdots&\vdots&&&\vbig{B''}&&\vdots\\
	1&0&b_q&&&&&&1\\
	0&0&b_{q+1}&&&&&&1\\
	\vdots&\vdots&\vdots&&&&&&\vdots \\
	0&0&b_m&&&&&&1\\
	\cline{1-9}
	c_0&c_1&c_1'&&&&&&1\\
	0&c_2&c_2'&&&&&&1\\
	\vdots&\vdots&\vdots&&&\vbig{C''}&&\vdots\\
	0&c_l&c_l'&&&&&&1\\
	\end{array}
	\right)
	\thicksim
	\left(
	\begin{array}{ccc|ccccc|c}
	1&0&-1&0&&\cdots&&0&0\\
	\cline{1-9}
	0&1&1&0&&\cdots&&0&1\\
	\cline{4-8}
	0&1&b_2&&&&&&1\\
	\vdots&\vdots&\vdots&&&&&&\vdots\\
	0&1&b_p&&&&&&1\\
	1&0&b_{p+1}&&&&&&1\\
	\vdots&\vdots&\vdots&&&\vbig{B''}&&\vdots\\
	1&0&b_q&&&&&&1\\
	0&0&b_{q+1}&&&&&&1\\
	\vdots&\vdots&\vdots&&&&&&\vdots \\
	0&0&b_m&&&&&&1\\
	\cline{1-9}
	c_0&c_1&c_1'&&&&&&1\\
	0&c_2&c_2'&&&&&&1\\
	\vdots&\vdots&\vdots&&&\vbig{C''}&&\vdots\\
	0&c_l&c_l'&&&&&&1\\
	\end{array}
	\right)
	$$
	$$
	\thicksim
	\left(
	\begin{array}{ccc|ccccc|c}
	1&0&0&0&&\cdots&&0&0\\
	\cline{1-9}
	0&1&1&0&&\cdots&&0&1\\
	\cline{4-8}
	0&1&b_2&&&&&&1\\
	\vdots&\vdots&\vdots&&&&&&\vdots\\
	0&1&b_p&&&&&&1\\
	1&0&b_{p+1}+1&&&&&&1\\
	\vdots&\vdots&\vdots&&&\vbig{B''}&&\vdots\\
	1&0&b_q+1&&&&&&1\\
	0&0&b_{q+1}&&&&&&1\\
	\vdots&\vdots&\vdots&&&&&&\vdots \\
	0&0&b_m&&&&&&1\\
	\cline{1-9}
	c_0&c_1&c_1'+c_0&&&&&&1\\
	0&c_2&c_2'&&&&&&1\\
	\vdots&\vdots&\vdots&&&\vbig{C''}&&\vdots\\
	0&c_l&c_l'&&&&&&1\\
	\end{array}
	\right)
	\thicksim
	\left(
	\begin{array}{ccc|ccccc|c}
	1&0&0&0&&\cdots&&0&0\\
	\cline{1-9}
	0&1&1&0&&\cdots&&0&1\\
	\cline{4-8}
	0&1&b_2&&&&&&1\\
	\vdots&\vdots&\vdots&&&&&&\vdots\\
	0&1&b_p&&&&&&1\\
	0&0&b_{p+1}+1&&&&&&1\\
	\vdots&\vdots&\vdots&&&\vbig{B''}&&\vdots\\
	0&0&b_q+1&&&&&&1\\
	0&0&b_{q+1}&&&&&&1\\
	\vdots&\vdots&\vdots&&&&&&\vdots \\
	0&0&b_m&&&&&&1\\
	\cline{1-9}
	0&c_1&c_1'+c_0&&&&&&1\\
	0&c_2&c_2'&&&&&&1\\
	\vdots&\vdots&\vdots&&&\vbig{C''}&&\vdots\\
	0&c_l&c_l'&&&&&&1\\
	\end{array}
	\right)$$
	
	$$
	=:
	\left(
	\begin{array}{c|ccc}
	1&0&\cdots&0\\
	\cline{1-4}
	0&&&\\
	\vdots&&\vbig{A''}\\
	0&&&\\
	\end{array}
	\right).
	$$\\
	One can easily show $A''$ also satisfies the assumption of the proposition. 
	Therefore, this completes the proof by induction.
\end{proof} 

\section{Normality}
Let $\Pc \subset \RR^n$ be a lattice polytope of dimension $d$
and $\Qc \subset \RR^d$  a full-dimensional unimodularly equivalent copy of $\Pc$.
Suppose that every lattice point in $\ZZ^d$ is an affine integer combination of the lattice points in $\Qc$.
We remark that a full-dimensional unimodularly equivalent copy of the edge polytope of any connected finite simple graph satisfies this condition (see the proof of \cite[Corollary 3.4]{HMT}).
We say that  $\Pc$ is \textit{normal} if for any positive integer $N$ and for any lattice point $\xb \in N\Qc \cap \ZZ^d$, there exist just $N$ lattice points $\xb_1,\ldots,\xb_N \in \Qc \cap \ZZ^d$ such that $\xb=\xb_1+\cdots+\xb_N$,
where $N\Qc=\{N\ab \mid \ab \in \Qc \}$.
Normal polytopes are popular objects in commutative algebra and toric algebraic geometry.
Moreover, this property is often called the \textit{integer decomposition property}, where the integer decomposition property is particularly important in the theory and application of integer programing \cite[\S 22.10]{integer}.

In this section, we discuss the normality of the reflexive polytopes arising from the edge polytopes of connected finite simple graphs described in Corollary \ref{Cor:ref}.
First, we introduce a criterion to determine the normality of the edge polytopes of connected finite simple graphs.
\begin{Theorem}[{\cite[Corollarly 2.3]{OH:normal}}]
	Let $G$ be a connected finite simple graph on $[n]$.
	Then $\Pc_G$ is normal if and only if for any two odd cycles $C$ and $C'$ of $G$ having no common vertex, there exists an edge of $G$ joining a vertex of $C$ with a vertex of $C'$. 
\end{Theorem}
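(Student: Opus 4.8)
The plan is to prove the two implications separately: ``only if'' by exhibiting a lattice point of a dilate of $\Pc_G$ that fails to be a sum of the right number of vertices, and ``if'' by reducing normality to the decomposability of one distinguished family of lattice points. For an edge $e=\{i,j\}$ write $\eb_e=\eb_i+\eb_j$ (a vertex of $\Pc_G$), and for a set $S$ of vertices write $\mathbf 1_S=\sum_{i\in S}\eb_i$. Since $G$ contains an odd cycle it is non-bipartite, so $\dim\Pc_G=n-1$ by Lemma~\ref{dim}; moreover, for a connected non-bipartite graph the sublattice $L\subseteq\ZZ^n$ spanned by the differences $\eb_e-\eb_{e'}$ equals $\{v\in\ZZ^n:\sum_i v_i=0\}$, since any two vertices are joined by a walk of even length (insert a traversal of an odd cycle into a given walk if its length is odd) and telescoping the identities $\eb_{\{p,q\}}-\eb_{\{q,r\}}=\eb_p-\eb_r$ along such a walk produces each $\eb_i-\eb_j$. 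Hence every $w\in N\Pc_G\cap\ZZ^n$ automatically lies in $N\eb_{e_0}+L$ for a fixed $e_0\in E(G)$, so the reduction to the integer decomposition property recorded earlier (see the proof of \cite[Corollary 3.4]{HMT}; it applies because $G$ is connected) shows that $\Pc_G$ is normal if and only if, for every $N\ge 1$, each $w\in N\Pc_G\cap\ZZ^n$ is a sum of $N$ vectors $\eb_e$ with $e\in E(G)$.

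For the contrapositive of ``only if'', suppose $C$ and $C'$ are odd cycles of $G$, of lengths $2a+1$ and $2b+1$, with $V(C)\cap V(C')=\emptyset$ and with no edge of $G$ joining $V(C)$ to $V(C')$. Put $N=a+b+1$ and $w=\mathbf 1_{V(C)}+\mathbf 1_{V(C')}$. Since every vertex of a cycle lies on exactly two of its edges, $\sum_{e\in E(C)}\eb_e=2\,\mathbf 1_{V(C)}$, so $\mathbf 1_{V(C)}\in\tfrac{2a+1}{2}\Pc_G$ and likewise $\mathbf 1_{V(C')}\in\tfrac{2b+1}{2}\Pc_G$, whence $w\in N\Pc_G\cap\ZZ^n$. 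If $w=\eb_{e_1}+\cdots+\eb_{e_N}$ with all $e_k\in E(G)$, then comparing coordinates forces $e_1,\dots,e_N$ to be pairwise disjoint with $\bigcup_k e_k=V(C)\cup V(C')$, i.e.\ to form a perfect matching of $V(C)\cup V(C')$ by edges of $G$. As no edge of $G$ runs between $V(C)$ and $V(C')$, the edges lying inside $V(C)$ would then perfectly match $V(C)$ --- impossible, since $|V(C)|=2a+1$ is odd. So no such representation exists, and $\Pc_G$ is not normal.

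For ``if'', assume any two vertex-disjoint odd cycles of $G$ are joined by an edge. I would invoke the description of the normalization of the edge subring (due to Simis, Vasconcelos and Villarreal; see also \cite{OH:normal}): the saturation of the affine semigroup generated by $\{(\eb_e,1):e\in E(G)\}$ is obtained from it by adjoining the elements $\bigl(\mathbf 1_{V(C)}+\mathbf 1_{V(C')},\,k_C+k_{C'}+1\bigr)$, where $C,C'$ range over the odd cycles of $G$ and $2k_C+1$ denotes the length of $C$. By the criterion above it then suffices to show that each $\mathbf 1_{V(C)}+\mathbf 1_{V(C')}$ is a sum of $k_C+k_{C'}+1$ vectors $\eb_e$ with $e\in E(G)$. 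If $V(C)\cap V(C')=\emptyset$, the hypothesis supplies an edge $\{u,u'\}$ with $u\in V(C)$ and $u'\in V(C')$; pairing consecutive vertices of $C$ starting immediately after $u$ matches $V(C)\setminus\{u\}$ by $k_C$ edges of $C$, similarly $V(C')\setminus\{u'\}$ by $k_{C'}$ edges of $C'$, and these together with $\{u,u'\}$ cover $V(C)\cup V(C')$ exactly once, so their sum is $\mathbf 1_{V(C)}+\mathbf 1_{V(C')}$. If instead $C$ and $C'$ share a vertex $u$, traverse $C$ and then $C'$ as a single closed walk based at $u$; it has even length $2(k_C+k_{C'}+1)$, and since each passage through a vertex uses one edge of each parity, the odd-indexed edges of the walk form a multiset of $k_C+k_{C'}+1$ edges of $G$ whose sum is again $\mathbf 1_{V(C)}+\mathbf 1_{V(C')}$. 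Hence every adjoined generator already lies in the semigroup, so the semigroup equals its saturation, i.e.\ $\Pc_G$ is normal.

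I expect the crux to be the structural input to the ``if'' direction --- that the obstructions to normality of $\Pc_G$ are carried precisely by pairs of odd cycles. To avoid quoting the normalization of the edge subring, one would instead begin with a representation $w=\sum_e c_e\eb_e$ of a lattice point $w\in N\Pc_G$ with $c_e\in\QQ_{\ge0}$ and $\sum_e c_e=N$, and argue: integrality of $w$ forces the edges with non-integral $c_e$ to be organized along odd cycles, and one can strictly decrease the number of non-integral coefficients by rerouting coefficient mass either along a bridge between two such cycles (supplied by the hypothesis) or across two that share a vertex, eventually reaching an integral representation. Making precise the claim that the ``fractional part'' of $w$ decomposes, up to such moves, into vertex-disjoint odd cycles is the technical heart of the argument; by contrast, the ``only if'' direction is just the short explicit construction above.
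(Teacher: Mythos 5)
The paper does not actually prove this statement---it is imported verbatim from \cite[Corollary 2.3]{OH:normal}---so there is no in-paper argument to compare against, and I can only assess your proposal on its own. Your reduction of normality to decomposability into the vectors $\eb_e$ is sound (for connected non-bipartite $G$ the affine lattice generated by the $\eb_e$ is indeed all of $\{x\in\ZZ^n : \sum_i x_i=2\}$, by the even-walk telescoping you describe), and your ``only if'' direction is complete and correct: $\mathbf 1_{V(C)}+\mathbf 1_{V(C')}$ lies in $(a+b+1)\Pc_G\cap\ZZ^n$, and any decomposition of it into $a+b+1$ edge vectors would be a perfect matching of $V(C)\cup V(C')$ using no crossing edges, which is impossible by parity.

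The gap is in the ``if'' direction. There you invoke the description of the normalization of the edge subring (Simis--Vasconcelos--Villarreal; Theorem 2.2 of \cite{OH:normal}) as a black box and verify only that, under the odd-cycle condition, each adjoined generator $\mathbf 1_{V(C)}+\mathbf 1_{V(C')}$ already lies in the semigroup. That verification is correct in both the bridged-disjoint and shared-vertex cases, but it is precisely the short deduction by which Ohsugi and Hibi obtain their Corollary 2.3 from their Theorem 2.2: the entire difficulty of the theorem---that the only obstructions to decomposing a lattice point of $N\Pc_G$ are carried by pairs of vertex-disjoint odd cycles---is the content of the result you cite, and your proposed self-contained replacement (rerouting fractional coefficient mass along odd cycles) is explicitly left unexecuted at its key step, namely the structure of the ``fractional part'' of a representation. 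Two smaller points: you silently assume $G$ is non-bipartite at the outset, whereas the bipartite case (condition vacuously true, normality still to be established) must also be covered and is handled only implicitly by the citation; and you should record that the lattice points of $\Pc_G$ are exactly the vectors $\eb_e$ (true because $\Pc_G$ sits inside the hypersimplex $\{0\le x\le 1,\ \sum_i x_i=2\}$), so that decomposition into lattice points and into edge vectors coincide. In sum, ``only if'' is done; ``if'' is correct only modulo a citation that carries essentially all of the work.
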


The following theorem gives a criterion to determine the normality of the reflexive polytopes arising from the edge polytopes of connected finite simple graphs described in Corollary \ref{Cor:ref}.
\begin{Theorem}
	\label{thm:normal}
	Let $G$ be a connected finite simple graph on $[n]$.
	Then $\Omega(\Pc_G)$ is normal if and only if $G$ does not contain two disjoint odd cycles.
\end{Theorem}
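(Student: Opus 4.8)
\textbf{Proof proposal for Theorem \ref{thm:normal}.}

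The plan is to reduce normality of $\Omega(\Pc_G)$ to a condition on the full-dimensional copy $\Qc_G$, and then to translate that condition into graph language using the known criterion for $\Pc_G$ (the preceding theorem, \cite[Corollary 2.3]{OH:normal}). First I would fix notation: let $d = \dim \Pc_G$, let $\Qc_G \subset \RR^d$ be the full-dimensional unimodularly equivalent copy constructed in Section 1, and recall from the proof of Corollary \ref{Cor:ref} that $\Omega(\Qc_G)$ is a full-dimensional unimodularly equivalent copy of $\Omega(\Pc_G)$, so it suffices to decide when $\Omega(\Qc_G)$ is normal. Because $\Omega(\Qc_G) = \textnormal{conv}\{(\Qc_G \times \{1\}) \cup (-\Qc_G \times \{-1\})\}$ and both $\Qc_G$ and $-\Qc_G$ are facets lying in the hyperplanes $x_{d+1} = \pm 1$, a standard slicing argument shows that $\Omega(\Qc_G)$ is normal if and only if the cone over it (equivalently, the toric ring it defines) is generated in degree one; concretely, a lattice point $(\xb, h)$ in $N \cdot \Omega(\Qc_G)$ with $-N \le h \le N$ must be written as a sum of $N$ generators. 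I would first dispatch the "boundary" layers $h = \pm N$: those force all summands into one facet, so normality there is exactly normality of $\Qc_G$, i.e.\ normality of $\Pc_G$, which by \cite[Corollary 2.3]{OH:normal} holds iff any two vertex-disjoint odd cycles of $G$ are joined by an edge. The real content is the interior layers.

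Next I would analyze the interior layers $|h| < N$. Writing a point of $N\Omega(\Qc_G)$ as $a$ copies of $\Qc_G \times \{1\}$-type generators plus $b$ copies of $-\Qc_G \times \{-1\}$-type generators with $a + b = N$ and $a - b = h$, the question becomes: given lattice points $\yb \in a\Qc_G$ and $\zb \in b\Qc_G$ with $\yb - \zb$ the target, can we always split off one "$+$" generator and one "$-$" generator so as to induct? Equivalently, after passing back to the graph, one is asking about writing a difference of two monomials of the edge ring in terms of the generators $x_i x_j$. I expect the key combinatorial fact to be: the obstruction to such a splitting is precisely the presence of two \emph{disjoint} (not merely vertex-disjoint, but with no connecting edge either — however here the $\Omega$ construction relaxes this) odd cycles. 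The mechanism: an odd cycle $C$ in $G$ contributes a relation showing $2\sum_{e \in C}(\text{monomial})$ is a product of generators, but a single odd cycle's "half-integer" point can always be paired, across the two facets, with a vertex of any other odd cycle or with the global vertex $\mathbf{0}$ of $\Qc_G$ (which exists because $G$ is connected and non-bipartite in the relevant normalization) — unless $G$ contains two odd cycles that are entirely disjoint, in which case their combined contribution in the interior layer $h=0$, $N=2$ produces a lattice point of $2\Omega(\Qc_G)$ not expressible as a sum of two generators. So I would (i) show that if $G$ has two disjoint odd cycles $C, C'$, the point $(\eb$-sum over $C) \times \{1\} + (\eb$-sum over $C') \times \{-1\}$ lands in $2\Omega(\Qc_G)$ but every decomposition fails a parity check, giving non-normality; and (ii) conversely, assuming no two disjoint odd cycles, run an induction on $N$ showing every interior lattice point splits, using that any odd cycle can be "connected" through the chosen vertex $\mathbf{0}$ and that the absence of a second disjoint odd cycle guarantees a bipartite-like structure on the complement one can decompose greedily.

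The main obstacle, I expect, is step (ii): proving that the weaker hypothesis "no two \emph{disjoint}" odd cycles (as opposed to the stronger "any two vertex-disjoint odd cycles are joined by an edge" appearing in the $\Pc_G$ criterion) already suffices for normality of the \emph{doubled} polytope $\Omega(\Pc_G)$. The point is that $\Omega$ supplies extra generators — both $\pm(\eb_i + \eb_j)$ and the apex $\mathbf{0} \times \{1\}$ and $\mathbf{0}\times\{-1\}$ from the full-dimensional copy — so relations that fail to split in $\Pc_G$ for lack of a connecting edge now split by going "through zero" in the other facet. Making this precise requires a careful case analysis of how a lattice point in an interior layer distributes its "odd part" between the two facets; I would handle it by choosing, among all decompositions realizing the given $(a,b)$-split, one minimizing some potential (e.g.\ total number of cycle-type summands), and showing that a non-splittable configuration would force two odd closed walks with disjoint vertex supports, hence two disjoint odd cycles after extracting simple cycles, contradicting the hypothesis. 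The boundary-layer analysis and the forward direction (i) should be comparatively routine once the bookkeeping is set up.
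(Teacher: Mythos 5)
There is a genuine gap, and it begins with a misreading of the hypothesis. In this theorem (and in the Ohsugi--Herzog--Hibi criterion of Lemma \ref{Lem:unimodular}) ``two disjoint odd cycles'' means two odd cycles sharing no \emph{vertex}; whether an edge joins them is irrelevant. Hence ``$G$ contains no two disjoint odd cycles'' is \emph{stronger} than the bridge condition characterizing normality of $\Pc_G$, not weaker as you assert: normality of $\Omega(\Pc_G)$ is \emph{harder} to achieve than normality of $\Pc_G$, and the extra generators of $\Omega$ rescue nothing. Your step (ii) is therefore aimed at a false statement: for $G$ two triangles joined by a single bridge edge, $\Pc_G$ is normal, the triangles are vertex-disjoint, and $\Omega(\Pc_G)$ is \emph{not} normal by the theorem --- yet your plan would try to prove it is, since you treat the connecting edge as removing the obstruction. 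Your witness in step (i) is also off: with $|V(C)|=2k+1$ and $|V(C')|=2\ell+1$, the point $\bigl(\sum_{v\in C}\eb_v,1\bigr)+\bigl(-\sum_{v\in C'}\eb_v,-1\bigr)$ does not lie in $2\Omega(\Qc_G)$, because its coordinate sum over $V(C)$ is $2k+1\ge 3$ while every point of the height-zero slice $\Qc_G-\Qc_G$ has coordinate sum at most $2$ there. The paper uses essentially this vector but placed correctly in $(k+\ell+1)\Omega(\Qc_G)$ at height $k-\ell$; counting inner products with $\sum_{v\in C}\eb_v$ and $\sum_{v\in C'}\eb_v$ forces exactly $k+1$ summands from $\Qc_G\times\{1\}$ and $\ell$ from $-\Qc_G\times\{-1\}$, giving height $k+1-\ell\ne k-\ell$, a contradiction. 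Note that this argument uses only vertex-disjointness, which confirms that a bridge does not help.

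For the converse the paper runs no splitting induction at all. It invokes Lemma \ref{Lem:unimodular}: the absence of two disjoint odd cycles makes $\Pc_G$ \emph{unimodular}, not merely normal. It then takes a pulling triangulation of $\Omega(\Qc_G)$ whose maximal simplices all contain the origin; such a simplex $\textnormal{conv}\{\mathbf{0},\vb_1,\dots,\vb_t,-\vb_{t+1},\dots,-\vb_{d+1}\}$ has the same normalized volume as $\textnormal{conv}\{\mathbf{0},\vb_1,\dots,\vb_{d+1}\}$, which is a lattice pyramid over a $d$-simplex with vertices in $\Qc_G$ and so has normalized volume $1$ by unimodularity. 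Thus $\Omega(\Qc_G)$ admits a unimodular triangulation and is normal. Your proposed greedy, potential-minimizing decomposition is left entirely unexecuted, omits the unimodularity input that makes the paper's argument work, and, because of the misreading above, cannot succeed on the class of graphs you intend to apply it to.
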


In order to prove this theorem, we need the following lemma.
Recall that a lattice polytope is called \textit{unimodular} if all its triangulations are unimodular, that is each simplex has the normalized volume equal to $1$.
Moreover, a lattice polytope is called \textit{compressed} if all its pulling triangulations are unimodular (refer to \cite{Sul}).
In particular, unimodular lattice polytopes are compressed and normal.

\begin{Lemma}[{\cite[Example 3.6 b]{OHH:pure}}]
	\label{Lem:unimodular}
	Let $G$ be a connected finite simple graph on $[n]$.
	Then $\Pc_G$ is unimodular if and only if $G$ does not contain two disjoint odd cycles.
\end{Lemma}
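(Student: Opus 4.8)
The plan is to detect unimodularity of $\Pc_G$ through the normalized volumes of the simplices spanned by its vertices and to translate each such volume into graph-theoretic data. Call a full-dimensional simplex whose vertices are vertices of $\Pc_G$ a \emph{spanning simplex}. I would first record the working criterion that $\Pc_G$ is unimodular if and only if every spanning simplex has normalized volume $1$: if some spanning simplex has volume $>1$, any triangulation having it as a cell is non-unimodular; conversely, if all spanning simplices are unimodular then a vertex-triangulation is unimodular, which forces every lattice point of $\Pc_G$ to be a vertex, so that all triangulations are vertex-triangulations and hence unimodular. If $G$ is bipartite it has no odd cycles, the right-hand condition holds vacuously, and $\Pc_G$ is unimodular because the incidence matrix of a bipartite graph is totally unimodular; so I may assume $G$ is non-bipartite, whence $\dim\Pc_G=n-1$ by Lemma \ref{dim} and every spanning simplex has exactly $n$ vertices.

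Next I would identify spanning simplices with subgraphs and compute their volume. A choice of $n$ edge vectors $\eb_i+\eb_j$ is affinely independent exactly when, viewed as a spanning subgraph $H\subseteq G$ on $[n]$ with $n$ edges, the unsigned incidence matrix of $H$ has rank $n$; since $H$ has as many edges as vertices, the rank computation for unsigned incidence matrices forces every connected component of $H$ to be non-bipartite with edge count equal to vertex count, i.e. unicyclic with an odd cycle. Writing $c$ for the number of components, the associated square incidence matrix has determinant $\pm 2^{c}$, each odd unicyclic component contributing a factor $\pm2$. Because all vertices of $\Pc_G$ lie on $\{\sum_k x_k=2\}$ while the origin lies on $\{\sum_k x_k=0\}$ at lattice distance $2$, the pyramid over the spanning simplex $\sigma$ with apex $\mathbf{0}$ has normalized volume $2^{c}=2\cdot\vol(\sigma)$, so $\vol(\sigma)=2^{c-1}$. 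Hence $\Pc_G$ is unimodular if and only if $c=1$ for every such spanning subgraph, that is, if and only if no spanning subgraph of this type has two or more components.

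It then remains to prove that a spanning subgraph whose components are all unicyclic-odd and whose number of components is at least $2$ exists if and only if $G$ contains two vertex-disjoint odd cycles. The forward implication is immediate, as two distinct components contain their odd cycles on disjoint vertex sets. For the converse, which is the crux, I would start from two vertex-disjoint odd cycles $C_1,C_2$, contract each $C_i$ to a vertex $x_i$ (the contraction is still connected), pick a spanning tree $\widehat T$ of it, and delete an edge on the $x_1$--$x_2$ path of $\widehat T$ to split it into subtrees $\widehat T_1\ni x_1$ and $\widehat T_2\ni x_2$. Lifting each $\widehat T_i$ back to $G$ and adjoining the edges of $C_i$ produces two vertex-disjoint connected subgraphs that together cover $[n]$, each carrying exactly one cycle, namely the odd cycle $C_i$; a direct edge/vertex count shows each uses as many edges as vertices, so their union is a spanning subgraph of the required type with $c=2$, giving a spanning simplex of normalized volume $2$ and certifying non-unimodularity.

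The main obstacle is precisely this converse construction: one must be sure the two disjoint odd cycles can be grown into a \emph{spanning} forest whose two pieces remain vertex-disjoint, cover all of $[n]$, and each acquire no second (independent) cycle, so that the unicyclic-odd structure — and thus the volume exponent $c=2$ — is preserved. The contraction-and-split step is designed to guarantee all three properties simultaneously; the determinant evaluation $\pm2^{c}$ and the incidence-rank characterization are classical and I would only cite or sketch them.
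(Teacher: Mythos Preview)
The paper does not supply its own proof of this lemma; it is quoted verbatim from \cite{OHH:pure} and used as a black box in the proof of Theorem~\ref{thm:normal}. So there is no argument in the paper to compare yours against.

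That said, your proposal is a correct self-contained proof and follows the natural line one would extract from the original reference. The identification of full-dimensional vertex simplices of $\Pc_G$ (in the non-bipartite case) with spanning subgraphs whose components are odd-unicyclic, the determinant computation $\pm 2^{c}$ for the incidence matrix of such a subgraph, and the resulting normalized volume $2^{c-1}$ via the lattice-distance-$2$ pyramid over the hyperplane $\{\sum x_k=2\}$ are all right. Your contraction-and-split construction for the converse is also correct: lifting each subtree $\widehat T_i$ and adjoining $C_i$ yields a connected graph on $(t_i-1)+|V(C_i)|$ vertices with the same number of edges, hence unicyclic with unique (odd) cycle $C_i$; the two pieces partition $[n]$, so $c=2$ and the corresponding simplex has normalized volume $2$.

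Two small points deserve one extra sentence each. First, in the forward direction of your ``working criterion'' you implicitly use that any full-dimensional simplex on vertices of $\Pc_G$ extends to a (lattice) triangulation of $\Pc_G$; this is standard (place the vertices of the simplex first and then the remaining vertices), but should be stated, since otherwise exhibiting a single bad simplex does not immediately yield a bad triangulation. Second, in the reverse direction you deduce ``every lattice point of $\Pc_G$ is a vertex'' from the existence of a unimodular vertex-triangulation; that is fine, but you could alternatively cite the elementary fact that $\Pc_G\cap\ZZ^n$ consists exactly of the edge vectors $\eb_i+\eb_j$, which short-circuits that step. Neither of these is a gap---the argument as written goes through.
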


Now, we prove Theorem \ref{thm:normal}.
\begin{proof}[Proof of Theorem \ref{thm:normal}]
First, let us assume that $G$ has two disjoint odd cycles $C_1$ and $C_2$.
Then it follows from Lemma \ref{dim} that the dimension of $\Pc_G$ equals $n-1$.
Moreover, we can assume that $V(C_1)=[2k+1]$ and $V(C_2)=\{2k+2,\ldots,2k+2\ell+2\}$ with some positive integers $k$ and $\ell$.
Let $\Qc_G \subset \RR^{n-1}$ be the full-dimensional unimodularly equivalent copy of $\Pc_G$ which is the convex hull of the row vectors of the incidence matrix $A_G$ of $G$ with $(2k+2\ell+2)$nd column deleted.
Then $\Omega(\Qc_G)$ is a full-dimensional unimodularly equivalent copy of $\Omega(\Pc_G)$ and 
one has $$\Omega(\Qc_G) \cap \ZZ^n=((\Qc_G\times\{1\})\cap \ZZ^n)\cup ((-\Qc_G\times\{-1\})\cap \ZZ^n) \cup \{(0,\ldots,0) \}.$$
We show that $\Omega(\Qc_G)$ is not normal.
Set 
$$\xb=\eb_1+\cdots+\eb_{2k+1}-(\eb_{2k+2}+\cdots+\eb_{2k+2\ell+1})+(k-\ell)\eb_n \in \ZZ^n.$$
Since $\eb_1+\eb_2,\ldots,\eb_{2k}+\eb_{2k+1}$ and  $\eb_1+\eb_{2k+1}$ are vertices of $\Qc_G$
and since $-\eb_{2k+2}-\eb_{2k+3},\ldots,-\eb_{2k+2\ell}-\eb_{2k+2\ell+1},-\eb_{2k+2\ell+1}$ and $-\eb_{2k+2}$ are vertices of  $-\Qc_G$,
it follows that $\xb \in (k+\ell+1)\Omega(\Qc_G) \cap \ZZ^n$.
Suppose that $\Omega(\Qc_G)$ is normal.
Then there exist just $k+m+1$ lattice points $\xb_1,\ldots,\xb_{k+m+1} \in \Omega(\Qc_G) \cap \ZZ^{n}$ such that $\xb=\xb_1+\cdots+\xb_{k+\ell+1}$.
For any vertex $\vb$ of  $\Qc_G \times \{1\}$, one has 
$\langle \vb, \eb_1+\cdots+\eb_{2k+1} \rangle \in \{0,1,2\}$ and $\langle \vb, \eb_{2k+2}+\cdots+\eb_{2k+2\ell+1} \rangle \in \{0,1,2\}$.
Hence since $\langle \xb, \eb_1+\cdots+\eb_{2k+1} \rangle =2k+1$ and $\langle \xb, \eb_{2k+2}+\cdots+\eb_{2k+2\ell+1} \rangle = 2\ell$,
we can assume that $\xb_1,\ldots,\xb_{k+1} \in \Qc_G \times \{1\}$ and 
$\xb_{k+2},\ldots,\xb_{k+\ell+1} \in -\Qc_G \times \{-1\}$.
Then one has $\langle \xb_1+\cdots+\xb_{k+\ell+1}, \eb_n\rangle=k-\ell+1$.
Thus, $\xb \neq \xb_1+\cdots+\xb_{k+\ell+1}$, a contradiction.
Therefore, $\Omega(\Qc_G)$ is not normal.

Conversely, assume that $G$ does not have two disjoint odd cycles.
Let $\Qc_G \subset \RR^d$ be a full-dimensional unimodularly equivalent copy of $\Pc_G$ defined in Section 1 (see Example \ref{full}).
Hence $\Omega(\Qc_G)$ is a full-dimensional unimodularly equivalent copy of $\Omega(\Pc_G)$ and 
one has $$\Omega(\Qc_G) \cap \ZZ^{d+1}=((\Qc_G\times\{1\})\cap \ZZ^{d+1})\cup ((-\Qc_G\times\{-1\})\cap \ZZ^{d+1}) \cup \{{(0,\ldots,0)} \}.$$
Let $\Delta$ be a pulling triangulation of $\Omega(\Qc_G)$ such that all its maximal simplices contain the origin of $\RR^{d+1}$. 
We will show that $\Delta$ is unimodular.
Let $\sigma$ be a maximal simplex of $\Delta$.
Then there exist $d+1$ lattice points $\xb_1,\ldots,\xb_{d+1}$ belonging to $\Qc_G \times \{1\}$ such that 
$$\sigma=\text{conv}\{{\mathbf 0},\vb_1,\ldots,\vb_t, -\vb_{t+1},\ldots,-\vb_{d+1}\}$$
with some integer $0 \leq t \leq d+1$.
It follows that the normalized volume of $\sigma$ equals $|\det (V)|$,
where $V$ is the $(d+1) \times (d+1)$ integer matrix whose $i$th row vector is $\vb_i$.
Set
$$\sigma'=\text{conv}\{{\mathbf 0},\vb_1,\ldots,\vb_{d+1}\}.$$
Then the normalized volume of $\sigma$ is equal to that of $\sigma'$.
We show that the normalized volume of $\sigma'$ is $1$.
In general, for a lattice polytope $\Pc \subset \RR^d$,
the lattice polytope $$\text{Pyr}(\Pc)=\text{conv}(\Pc \times \{0\}, \eb_{d+1}) \subset \RR^{d+1}$$ 
is called the \textit{lattice pyramid} over $\Pc$.
It is known that the normalized volume of $\Pc$ equals that of $\text{Pyr}(\Pc)$.
Let
$\tau$ be the lattice simplex which is the convex hull of the row vectors of $V$ with the last column deleted.
Then $\tau$ is a simplex of dimension $d$ all of whose vertices belonging to $\Qc_G$.
By Lemma \ref{Lem:unimodular} and \cite[Theorem 5.6.3]{dojo}, 
the normalized volume of any maximal simplex all of whose vertices belonging to $\Qc_G$ is $1$, that of $\tau$ is also $1$.
Since $\text{Pyr}(\tau)$ is unimodularly equivalent to $\sigma'$, the normalized volume of $\sigma'$ equals $1$.
Hence $\Delta$ is unimodular.
\end{proof}

\end{document}